\title{On the cobordism groups of cooriented, codimension one Morin maps}
\author{Andr\'as Sz\H{u}cs}
\address{Department of Analysis\\
E\"otv\"os University\\\newline
P\'azm\'any P\'eter s\'et\'any 1/C\\
H--1117 Budapest, Hungary}
\email{szucs@cs.elte.hu}
\urladdr{}
\newtheorem{thm}{Theorem} 
\newtheorem{lem}[thm]{Lemma}
\newtheorem{claimnumbered}{Claim}
\newtheorem*{theor}{Theorem}
\newtheorem*{claim}{Claim}
\newtheorem*{claima}{Claim a)}
\newtheorem*{claimb}{Claim b)}
\theoremstyle{definition}
\newtheorem*{defn}{Definition}
\newtheorem*{rem}{Remark}
\numberwithin{section}{part}
\def\co{\colon\thinspace}
\def\Cob{\text{\rm Cob}}
\def\Imm{\text{\rm Imm}}
\def\Ker{\text{\rm Ker}}
\def\lra{\longrightarrow }
\def\wt{\widetilde }
\def\wh{\widehat }
\def\fr{\text{\rm fr}}
\def\circledone{\textstyle\bigcirc\kern-8.55pt\raisebox{-.3pt}{$\scriptstyle 1$}\hspace*{2pt}}
\def\circledtwo{\textstyle\bigcirc\kern-8.4pt\raisebox{-.3pt}{$\scriptstyle 2$}\hspace*{2pt}}
\def\circledthree{\textstyle\bigcirc\kern-8.4pt\raisebox{-.3pt}{$\scriptstyle 3$}\hspace*{2pt}}
\begin{document}
\begin{abstract}    

\end{abstract}
\maketitle

\part{Cobordism of fold maps and the Kahn--Priddy map}
\label{part:I}

\section{Formulation of the result}
\label{sec:1}

Let us denote by $\text{\rm Cob}\, \Sigma^{1,0}(n)$ the cobordism group of cooriented, codimension~$1$ fold maps of closed, smooth, $n$--dimensional manifolds in $R^{n + 1}$ (see \cite{Sz3}).

(A fold map may have only $\Sigma^{1,0}$--type (or $A_1$--) singular points, see \cite{AGV}.)

\renewcommand{\thethm}{\Alph{thm}}
\begin{thm}
\label{th:A}
\begin{itemize}
\item[\emph{\rm a)}]
$\Cob\, \Sigma^{1,0}(n)$ is a finite Abelian group.
\item[{\rm b)}]
Its odd torsion part is isomorphic to that of the $n$\textsuperscript{th}  stable homotopy group of spheres, i.e.\ for any odd prime $p$ $\Cob\, \Sigma^{1,0}(n)_p \approx \pi^s(n)_p$, where the lower index $p$ denotes the $p$--primary part.
\item[{\rm c)}]
Its $2$--primary part is isomorphic to the kernel of the Kahn--Priddy homomorphism \cite{KP}:
\[
\lambda_* \co \pi_{n - 1}^s (RP^\infty) \lra \pi^s(n - 1).
\]
\end{itemize}
\end{thm}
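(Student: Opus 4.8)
The plan is to identify $\Cob\,\Sigma^{1,0}(n)$ with a homotopy group of a classifying space, to put that space into an exact sequence whose connecting homomorphism is the Kahn--Priddy map, and to read off (a)--(c) from this sequence together with the Kahn--Priddy theorem \cite{KP}. By the Pontryagin--Thom construction for cobordism of singular maps \cite{Sz3}, $\Cob\,\Sigma^{1,0}(n)\cong\pi_{n+1}(X_{\Sigma^{1,0}})$, where $X_{\Sigma^{1,0}}$ is the classifying space of cooriented codimension--one $\Sigma^{1,0}$--maps; it carries the natural filtration by singularity strata. Since a $\Sigma^{1,0}$--map has only the two strata $\Sigma^0$ (regular part) and $\Sigma^{1,0}$ (fold part), this filtration is a single cofibration
\[
X_{\Sigma^0}\lra X_{\Sigma^{1,0}}\lra X_{\Sigma^{1,0}}/X_{\Sigma^0},
\]
where $X_{\Sigma^0}$ classifies cooriented codimension--one immersions and the quotient is $\Gamma$ of the Thom space of the universal normal bundle of the fold stratum. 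This cofibration is $\Omega^\infty$ of a cofibration of connective spectra, hence induces a long exact sequence of homotopy groups.

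First I would pin down the two outer terms. A cooriented codimension--one normal bundle is trivial, so $X_{\Sigma^0}\simeq\Gamma S^1=\Omega^\infty\Sigma^\infty S^1$ and $\pi_{n+1}(X_{\Sigma^0})=\pi^s(n)$. For the quotient one computes the automorphism group of the fold germ and the representation carrying its normal bundle: the group acts by interchanging the two local sheets of the fold, and --- the coorientation ruling out the extra reflection that would otherwise appear --- it is $\mathbb Z/2$, so the Thom space is built over $RP^\infty=B\mathbb Z/2$; a short computation identifies it with $\Sigma^2 RP^\infty$, whence $\pi_{n+1}(X_{\Sigma^{1,0}}/X_{\Sigma^0})=\pi^s_{n-1}(RP^\infty)$. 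The cofibration then yields the exact sequence
\[
\cdots\lra\pi^s(n)\lra\Cob\,\Sigma^{1,0}(n)\lra\pi^s_{n-1}(RP^\infty)\stackrel{\partial}{\lra}\pi^s(n-1)\lra\cdots ,
\]
with $\partial$ induced by the attaching map $\Sigma^2 RP^\infty\to S^2$ of the fold stratum onto $X_{\Sigma^0}$. As $\pi^s(k)$ and $\pi^s_k(RP^\infty)$ are finite for all $k\ge 0$, this already proves (a): for $n\ge 1$, $\Cob\,\Sigma^{1,0}(n)$ is an extension of a subgroup of $\pi^s_{n-1}(RP^\infty)$ by a quotient of $\pi^s(n)$, hence a finite Abelian group.

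The crux --- the step I expect to be the genuine obstacle --- is to prove that $\partial$ equals, up to sign, the Kahn--Priddy homomorphism $\lambda_*$. Here $\partial$ has a transparent geometric meaning: a class in $\pi^s_{n-1}(RP^\infty)$ is a framed $(n-1)$--manifold $N$ with a map $g\co N\to RP^\infty$, hence with a double cover $\wt N\to N$, and $\partial$ should carry it to the framed manifold $\wt N$ --- i.e.\ $\partial$ should be the stable transfer of the universal double cover $S^\infty\to RP^\infty$, which is precisely the Kahn--Priddy map. On the fold--map side this says: the homomorphism $\Cob\,\Sigma^{1,0}(n)\to\pi^s_{n-1}(RP^\infty)$ records the fold locus of a map $f$ with its normal data and the $\mathbb Z/2$--symmetry that interchanges the two sheets meeting along it, and $\partial$ is the obstruction to realising an abstract datum of this kind by an honest fold map --- an obstruction which, after unwinding the normal form of the fold, amounts to ``remember the total space $\wt N$ of the double cover as a cooriented codimension--one immersion and forget the $\mathbb Z/2$--structure''. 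Making this identification on the level of maps of spaces --- as opposed to merely observing that $\partial$ is nonzero --- is where the real work lies.

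Granting $\partial=\lambda_*$, parts (b) and (c) are formal. The reduced integral homology of $RP^\infty$ is all $2$--torsion, so $\pi^s_k(RP^\infty)$ is a finite $2$--group for $k>0$ and vanishes for $k=0$; localising the exact sequence at an odd prime $p$ therefore annihilates both $RP^\infty$--terms and gives $\Cob\,\Sigma^{1,0}(n)_p\cong\pi^s(n)_p$, which is (b). For (c), the Kahn--Priddy theorem \cite{KP} states that $\lambda_*\co\pi^s_n(RP^\infty)\to\pi^s(n)$ is surjective onto the $2$--primary component; hence after $2$--localisation the image of $\pi^s_n(RP^\infty)$ is all of $\pi^s(n)_{(2)}$, the map $\pi^s(n)\to\Cob\,\Sigma^{1,0}(n)$ becomes zero, and the exact sequence collapses to
\[
0\lra\Cob\,\Sigma^{1,0}(n)_{(2)}\lra\pi^s_{n-1}(RP^\infty)\stackrel{\lambda_*}{\lra}\pi^s(n-1).
\]
Since $\pi^s_{n-1}(RP^\infty)$ is already a $2$--group, the kernel here equals $\Ker\bigl(\lambda_*\co\pi^s_{n-1}(RP^\infty)\to\pi^s(n-1)\bigr)$, which is (c).
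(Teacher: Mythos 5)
Your architecture matches the paper's in broad outline (classifying space, exact sequence with boundary map, formal deduction of (a)--(c) from Kahn--Priddy surjectivity), and the final formal deductions of (a), (b), (c) from the exact sequence plus $\partial=\lambda_*$ are correct. But there are two genuine gaps. First, you derive the long exact sequence from an asserted cofibration $X_{\Sigma^0}\to X_{\Sigma^{1,0}}\to X_{\Sigma^{1,0}}/X_{\Sigma^0}$ that you declare to be ``$\Omega^\infty$ of a cofibration of connective spectra.'' That would require $X_{\Sigma^{1,0}}$ to be an infinite loop space and the inclusion of $X_{\Sigma^0}$ to be an infinite loop map, neither of which is established. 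The paper instead uses the ``key bundle'': the map $X_{\Sigma^{1,0}}\to\Gamma T\widetilde\xi_{\Sigma^{1,0}}$ is a \emph{Serre fibration} with homotopy fiber $\Gamma S^1$ -- a nontrivial theorem of \cite{Sz3} (with an elementary proof by Terpai \cite{T}) -- and the long exact sequence is the homotopy sequence of that fibration. Your LES is correct, but the justification you give for it is not.

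Second, and more seriously, you do not prove that $\partial=\lambda_*$; you flag it yourself as ``where the real work lies.'' This identification is exactly Lemma~3 of the paper, and its proof is the bulk of Part~I: one interprets $\partial$ geometrically as restriction of a relative fold map to the boundary, shows via the Rourke--Sanderson multicompression theorem (Claims a and b) that this boundary immersion is framed cobordant to the result of Koschorke's figure-8 construction applied to the fold locus, and then invokes Koschorke's theorem that $8_*=\lambda_*$. Your proposed route is different: you want to realize $\partial$ as the stable transfer of the double cover $S^\infty\to RP^\infty$ and then use that the transfer is the Kahn--Priddy map. That is a legitimate alternative description of $\lambda_*$, but you establish neither ``$\partial=\text{transfer}$'' (which is comparable in difficulty to Claims a and b) nor cite the fact that the transfer agrees with the composite $RP^{q-1}\hookrightarrow O(q)\to\Omega^qS^q$ used in \cite{KP}. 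Until one of these identifications is actually carried out, the argument for (b) and (c) does not go through.
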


\renewcommand{\thethm}{\arabic{thm}}
\setcounter{thm}{0}
\begin{rem}
The group $\pi_{n - 1}^s(RP^\infty)$ is a $2$--primary group and $\lambda_*$ is onto the $2$--primary part of $\pi^s(n - 1)$, see \cite{KP}.
\end{rem}

Hence: $\Cob\, \Sigma^{1,0}(n) \approx \pi^s(n)_{\{\text{odd torsion part}\}} \oplus \Ker \left(\lambda^* \co \pi_{n - 1}^s (RP^\infty) \lra \pi^s(n - 1)\right)$.

\section{The Kahn--Priddy map (\cite{KP}, \cite{K})}
\label{sec:2}

Let us consider the composition of the following maps:

a) $RP^{q - 1} \hookrightarrow O(q)$.
A line $L \subset R^q$, $[L] \in RP^{q - 1}$ is mapped into the reflection in its orthogonal hyperplane.

b) $O(q) \hookrightarrow \Omega^q S^q$ maps $A \in O(q)$ to the map
\begin{eqnarray*}
S^q = R^2 \cup \infty & \lra  & S^q = R^q \cup \infty \, \,\text{ defined as}\\
x\qquad & \lra  & \quad A(x) \qquad \quad \text{ for } \ x \in R^q \ \text{ and}\\
\infty \quad\ \ & \lra  & \quad\ \infty.
\end{eqnarray*}

Take the adjoint of the composition map $RP^{q - 1} \lra \Omega^q S^q$.
It is a map $\lambda \co \Sigma^q RP^{q - 1} \lra  \Omega^q S^q$.

If $n < q$, then the homotopy groups
$\pi_{q + n} (\Sigma^q \, RP^{q - 1})$ and $\pi_{q + n}(S^q)$ are stable, and
\[
\pi_n^s(RP^{q - 1}) \approx \pi_n^s(RP^\infty).
\tag{$*$}
\]
The Kahn--Priddy homomorphism $\lambda_*\co \pi_n^s(RP^\infty) \lra \pi^s(n)$ is the homomorphism induced by $\lambda$ in the stable homotopy groups (precomposed with the isomorphism $(*)$).

\begin{thm}[Kahn--Priddy \cite{KP}]
\label{th:1}
$\lambda_*$ is onto the $2$--primary component of $\pi^s(n)$.
\end{thm}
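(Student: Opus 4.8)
The plan is to recall the classical proof of Kahn and Priddy. Since $\pi_{q+n}(\Sigma^q RP^{q-1})\to\pi_{q+n}(S^q)$ is stable for $n<q$, I would work stably throughout: let $\lambda\co Q(RP^\infty)\to Q_0S^0$ be the infinite loop map induced by the stable map of \S\ref{sec:2}, which on homotopy realizes $\lambda_*\co\pi^s_n(RP^\infty)\to\pi^s(n)$ for $n\ge1$. Since $\pi^s_n(RP^\infty)$ is a finite $2$-group, everything off the prime $2$ is automatic, and what remains is to show $\lambda_*$ is onto the $2$-primary component of $\pi^s(n)$ for every $n\ge1$. The strategy is to manufacture a self-map of $Q_0S^0$ that factors through $\lambda$ and becomes a homotopy self-equivalence after localization at $2$. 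Concretely: by the Snaith splitting the second stable extended power of $S^0$ is $D_2S^0\simeq\Sigma^\infty RP^\infty_+$, so the quadratic James--Hopf map $Q_0S^0\to Q(D_2S^0)$ followed by the projection onto the reduced summand $\Sigma^\infty RP^\infty$ yields a map of spaces $h\co Q_0S^0\to Q(RP^\infty)$; put $\varphi:=\lambda\circ h$. If $\varphi$ induces an isomorphism on the $2$-primary part of each finite group $\pi^s(n)$, $n\ge1$, then every such class $\alpha=\varphi_*(\beta)=\lambda_*(h_*\beta)$ lies in the image of $\lambda_*$, which is the assertion. (Note that there is no stable splitting $\mathbb S\to\Sigma^\infty RP^\infty$ --- one genuinely works with this space-level factorization.)

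To analyze $\varphi_*$ I would pass to the symmetric-group model. By the Barratt--Priddy--Quillen theorem $Q_0S^0\simeq B\Sigma_\infty^{+}$, the stable extended powers are the wreath-product pieces, and under these identifications $h$ and $\lambda$ are built from the inclusion $\mathbb Z/2=\Sigma_2\hookrightarrow\Sigma_\infty$ and its transfer, so $\varphi$ becomes the ``double-coset'' self-map attached to $\Sigma_2$. The argument is then powered by two facts about the prime $2$: the $2$-Sylow subgroup of $\Sigma_{2^k}$ is the iterated wreath product $\mathbb Z/2\wr\cdots\wr\mathbb Z/2$ ($k$ factors), built inductively from $\mathbb Z/2=\Sigma_2$; and, via the double-coset formula, $\varphi$ acts on the subquotients of the symmetric-power (Snaith) filtration of $Q_0S^0$ --- the $k$-th governed by $B\Sigma_k$ --- as a unit multiple of the identity plus operators that strictly lower the filtration. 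Organizing the error with the Dyer--Lashof operations and Nishida's nilpotence relations, one concludes that $\varphi_*$ on each finite group $\pi^s(n)$ is a unit multiple of the identity plus a nilpotent operator, hence an isomorphism, finishing the proof. The analogous argument at an odd prime $p$ would run with $B\mathbb Z/p$ in place of $RP^\infty$; here only the prime $2$ is needed.

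The technical heart --- and the step I expect to be the main obstacle --- is the second one: making the induction over the extended-power filtration precise, and verifying through the double-coset formula (equivalently, through the homology operations on $Q_0S^0$ and the Nishida relations) that the off-diagonal contributions really do decrease the filtration, so that the error is nilpotent on every fixed $\pi^s(n)$. This is the computation carried out in \cite{KP}, later recast by Segal via configuration spaces and by Adams via the Dyer--Lashof operations. An alternative route is the Adams spectral sequence: first prove the algebraic Kahn--Priddy theorem, that $\lambda$ induces a surjection $\mathrm{Ext}_{\mathcal A}^{*,*}(\wt{H}^{*}RP^\infty;\mathbb F_2)\to\mathrm{Ext}_{\mathcal A}^{*,*}(\mathbb F_2;\mathbb F_2)$ in positive stems, then deduce the topological statement by a convergence argument --- but that merely relocates the difficulty into the $\mathrm{Ext}$ computation and the passage from $E_2$ to $E_\infty$.
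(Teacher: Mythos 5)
The paper does not prove Theorem~\ref{th:1}; it quotes it from Kahn--Priddy \cite{KP} and uses it as a black box (together with Koschorke's Theorem~\ref{th:2}) to deduce part~(c) of Theorem~\ref{th:A} from Lemma~\ref{lem:3}. So there is no in-paper argument to compare yours against, and a citation would have matched the paper's scope.

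Taken on its own terms, your outline is a faithful reconstruction of the classical Kahn--Priddy strategy: build the space-level self-map $\varphi=\lambda\circ h$ of $Q_0S^0$ through $Q(RP^\infty)$ via the quadratic James--Hopf map coming from the extended-power (Snaith) filtration, then argue that $\varphi$ is a $2$-local equivalence by a transfer/double-coset analysis keyed to the iterated wreath-product structure of the $2$-Sylow subgroups of the symmetric groups, with Dyer--Lashof operations and the Nishida relations controlling the lower-filtration error. Your side remark that there is no stable map $\mathbb{S}\to\Sigma^\infty RP^\infty$ splitting $\lambda$ --- so that $h$ must be used at the space level, where it is not an infinite-loop map --- is exactly the right caution and is the reason Kahn--Priddy gives surjectivity rather than a stable splitting. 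The one genuine gap is the one you flag yourself: the verification, via the double-coset formula, that the off-diagonal contributions strictly decrease filtration and so act nilpotently on each finite group $\pi^s(n)$. As written this is a plan that defers the technical heart to \cite{KP}; it is a correct plan, but not yet a proof. The alternative routes you mention (Segal's configuration-space recasting, Adams' approach via Dyer--Lashof operations, the algebraic Kahn--Priddy theorem through the Adams spectral sequence) are all standard and would serve equally, and you correctly note they relocate rather than remove the computational core.
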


\section{Koschorke's interpretation of {$\lambda_*$}}

Ulrich Koschorke gave a very geometric description of the Kahn--Priddy homomorphism through the so-called ``figure 8 construction''.
Given an immersion of an $(n - 1)$--dimensional (unoriented) manifold $N^{n - 1}$ into $R^n$ the figure 8 construction associates with it an immersion of an oriented $n$--dimensional manifold $M^n$ into $R^{n + 1}$ as follows:

Let us consider the composition $N^{n - 1} \looparrowright R^n \hookrightarrow R^{n + 1}$.

This has normal bundle of the form $\varepsilon^1 \oplus \zeta^1$, where $\varepsilon^1$ is the trivial line bundle (the $(n + 1)$\textsuperscript{th}  coordinate direction in $R^{n + 1}$) and $\zeta^1$ is the normal line bundle of $N^{n - 1}$ in $R^n$.

Let us put a figure 8 in each fiber of $\varepsilon^1 \oplus \zeta^1$ symmetrically with respect to the reflection in the fiber~$\zeta^1$.
Choosing these figures 8 smoothly their union gives the image of an immersion of an oriented $n$--dimensional manifold $M^n$ into $R^{n + 1}$.
(Clearly $M^n$ is the total space of the circle bundle $S(\varepsilon^1 \oplus \zeta^1)$ over $N^{n - 1}$.)

This construction gives a map $8_*\co \pi_n^s(RP^\infty) \lra \pi_n^s(n)$.
Indeed, the cobordism group of immersion of unoriented $(n - 1)$--dimensional manifolds in $R^n$ is isomorphic to $\pi_n^s(RP^\infty)$, and that of oriented $n$--dimensional manifolds in $R^{n + 1}$ is $\pi^s(n)$.

Since the figure 8 construction respects the cobordism relation (i.e.\ it associates to cobordant immersions such ones) we obtain a map of the cobordism groups.

\begin{thm}[Koschorke, {\cite[Theorem 2.1]{K}}]
\label{th:2}
The maps $\lambda_*$ and $8_*$ coincide.
\end{thm}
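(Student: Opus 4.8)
The plan is to fix an arbitrary class $\alpha\in\pi_n^s(RP^\infty)$, represent it by a concrete codimension one immersion via Pontryagin--Thom, compute $\lambda_*(\alpha)$ geometrically, and recognise the outcome as the figure~8 immersion attached to that representative. So let $g\co N^{n-1}\looparrowright R^n$ be a codimension one immersion with normal line bundle $\zeta$, representing $\alpha$ under the identification (recalled in the excerpt) of $\pi_n^s(RP^\infty)$ with the cobordism group of such immersions. Classify $\zeta$ by $\bar g\co N\to RP^{q-1}$ with $q\gg n$, so that $\zeta=\bar g^{*}\gamma^{1}$ and the isomorphism $(*)$ applies, and let $a$ be a Pontryagin--Thom representative of $\alpha$ in a suspension $\Sigma^{q}RP^{q-1}$, built from $\bar g$ together with a Thom collapse of a compression of $g$. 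Then $\lambda_*(\alpha)\in\pi^s(n)$ is the framed bordism class of $a^{-1}\bigl(\lambda^{-1}(y_0)\bigr)$ for a regular value $y_0\in R^q\subset S^q$ of $\lambda\circ a$.

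Since $\lambda$ is the adjoint of $RP^{q-1}\ni[L]\mapsto r_{L^{\perp}}\in O(q)\hookrightarrow\Omega^{q}S^{q}$, on the main cell it is $([L],x)\mapsto r_{L^{\perp}}(x)$, so $\lambda^{-1}(y_0)=\{([L],r_{L^{\perp}}y_0):[L]\in RP^{q-1}\}$ is a copy of $RP^{q-1}$, isotopic to the zero section of $\Sigma^{q}RP^{q-1}$, but with the derivative of $\lambda$ transverse to it equal to the family of reflections $\{r_{L^{\perp}}\}$; thus the inherited normal framing is twisted precisely by \emph{the reflections in the lines $L$}. Pulling back along $a$ restricts this twisting family along $\bar g$: over each point of $N$ the twist is by the reflection in the normal line $\zeta_x$, which --- read together with the new coordinate direction $\varepsilon^{1}$ --- is exactly the recipe for inserting the $O(1)$--invariant, fibre-symmetric lemniscate into the plane $\varepsilon^{1}\oplus\zeta_x$; equivalently, the reflection $v\mapsto-v$ of $\zeta_x$ is what doubles the interval fibre $D(\zeta_x)=D^{1}$ of the tubular neighbourhood of $g$. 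Carrying this through, $a^{-1}\bigl(\lambda^{-1}(y_0)\bigr)$ is the circle bundle $S(\varepsilon^{1}\oplus\zeta)=M^{n}$ immersed in $R^{n+1}$ with its fibres traced out as figure~8s, carrying the framing induced by that immersion; hence $\lambda_*(\alpha)=[M^{n}\looparrowright R^{n+1}]=8_*(\alpha)$.

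The step I expect to be the real obstacle is exactly this recognition: one must verify, in an explicit local model near the zero section, that the adjoint of the reflection family $[L]\mapsto r_{L^{\perp}}$ genuinely reproduces Koschorke's figure~8 construction, and do so \emph{at the level of the normal framings}, not merely of the underlying manifolds --- since $\pi^s(n)$ is finite torsion, a stable framing differing by a nonzero element of $\pi^s(n)$ would change the class. Care with normalisations is also needed, because $RP^{q-1}\to\Omega^{q}S^{q}$ lands in the degree $(-1)$ component, not the basepoint component; the convention that makes $\lambda$ into a pointed stable map --- hence the precise identification of its reduced source with the immersion cobordism group --- has to be fixed before the computation can be read off.

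As a check, on the bottom cell $RP^{1}=S^{1}\subset RP^{\infty}$ the construction sends the generator, i.e.\ the class of a point in $R^{1}$, to the figure~8 of a point: the standard immersed circle $S^{1}\looparrowright R^{2}$, which represents $\eta$. Correspondingly $\lambda|_{S^{1}}=\eta$, since by Theorem~\ref{th:1} $\lambda_*$ is onto $\pi^s(1)$ and $\pi_1^s(RP^\infty)\cong\pi^s(1)$ is generated by that bottom cell. More generally $\lambda_*$ and $8_*$ agree on the image of $\pi^s(n-1)$ in $\pi_n^s(RP^\infty)$ --- the classes of immersions with trivial normal bundle --- where both are multiplication by $\eta$; for $8_*$ this is because the figure~8 of such an immersion is a product immersion, and so represents the corresponding product of stable homotopy classes. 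This does not close the argument, however: a homotopy-theoretic induction over the skeleta of $RP^\infty$ would leave at each stage a difference obstruction in some $\pi^s(m)$ with no formal reason to vanish, so the geometric computation above seems unavoidable.
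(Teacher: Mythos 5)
The paper does not prove this statement; Theorem~\ref{th:2} is quoted from Koschorke's paper {\cite[Theorem~2.1]{K}} and used as a black box. So there is no proof in the paper to compare yours against.

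As a reconstruction of Koschorke's argument, your plan (Pontryagin--Thom in both directions, then transversality to a regular value of $\lambda\circ a$) is the natural strategy, and the skeleton is right. But the two ``obstacles'' you flag at the end are not merely normalisations to be fixed; they are where the proof actually lives, and as written there is a genuine gap at the central step. Concretely: inside the tubular neighbourhood $D(\varepsilon^{q}\oplus\zeta)$ of $N$, the fibrewise preimage of $y_{0}$ under the reflection family is \emph{not} the circle bundle $S(\varepsilon^{1}\oplus\zeta)$. For each $x\in N$ and each $w\in D(\zeta_{x})$ the equation $r_{\ell(x,w)^{\perp}}(v)=y_{0}$ has exactly one solution $v=r_{\ell(x,w)^{\perp}}(y_{0})$, so the naive preimage over $x$ is an arc, and globally one gets only the interval bundle $D(\zeta)\to N$ --- a manifold with boundary $S(\zeta)$, which is not even closed. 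This is a symptom of the fact that $\lambda$ is \emph{not} a pointed map on $\Sigma^{q}RP^{q-1}$: it is defined on $\Sigma^{q}RP^{q-1}_{+}$, and its restriction to the wedge summand $S^{q}$ is a degree~$-1$ map (the suspension of a single reflection). The Kahn--Priddy map of the paper is $\lambda$ with this degree~$-1$ summand split off. When you subtract that summand, its preimage contributes a second copy of $D(\zeta)$ (the locus $v=y_{0}$ in the tubular neighbourhood), and it is precisely the gluing $D(\zeta)\cup_{S(\zeta)}D(\zeta)=S(\varepsilon^{1}\oplus\zeta)$ that produces Koschorke's circle bundle, with the two half-lemniscates coming from the two summands. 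In other words, the basepoint correction \emph{is} the figure~8; it cannot be ``fixed before the computation is read off'' and then ignored.

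Beyond that, the framing comparison still has to be carried out explicitly through this gluing (as you yourself note, $\pi^{s}(n)$ is all torsion, so the underlying manifold is far from enough), and your closing checks --- the bottom cell and the image of $\pi^{s}(n-1)$ --- are sanity checks rather than steps that reduce the general case. So the proposal is a correct outline with the decisive local computation missing; filling it requires working through the $\Sigma^{q}RP^{q-1}_{+}\simeq S^{q}\vee\Sigma^{q}RP^{q-1}$ splitting at the level of framed preimages, which is exactly what Koschorke's original proof does.
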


This theorem of Koschorke will be the main tool in the computation of the cobordism groups of fold maps.

\section{Generalities on the cobordisms of singular maps}
\label{sec:4}

In \cite{Sz3} we considered cobordism groups of singular maps with a given set $\tau$ of allowed local forms.
(Such a map was called a $\tau$--map.)
The cobordism group of (cooriented) $\tau$--maps of $n$--dimensional manifolds in Euclidean space was denoted by $\Cob_\tau(n)$.
A classifying space $X_\tau$ has been constructed for $\tau$--maps with the property that its homotopy groups are isomorphic to the groups $\Cob_\tau(n)$.

An ancestor of the spaces $X_\tau$ was the classifying space for the cobordism groups of immersions.
Namely given a vector bundle $\xi^k$ we denote by $\Imm^\xi(n)$ the cobordism group of immersions of $n$--manifolds in $R^{n + k}$ such that the normal bundle is induced from~$\xi$.
There is a classifying space $Y(\xi)$ such that
\[
\pi_{n + k}(Y(\xi)) \approx \Imm^\xi(n).
\]
Namely $Y(\xi) = \Gamma(T\xi)$, where $T\xi$ denotes the Thom space of the bundle $\xi$, and $\Gamma = \Omega^\infty S^\infty$.
(This follows by a slight modification from \cite{W}.)

Next we recall the so-called ``key bundle'', that is the main tool in handling cobordism groups of singular maps.

Let $\tau$ be a list of allowed local forms, and let $\eta$ be a maximal element in it.
(The set of local forms has a natural partial ordering, $\eta$ is greater than $\eta'$ if an isolated $\eta$--germ (at the origin) has an $\eta'$--point arbitrarily close to the origin.)

Let $\tau'$ be $\tau\setminus \{\eta\}$ (i.e.\ we omit the maximal element~$\eta$).

Note that the stratum of $\eta$ points is immersed.
We have established in \cite{Sz3} that there is a universal bundle -- denoted by $\widetilde\xi_\eta$ -- for the normal bundles of $\eta$--strata from which these normal bundles always can be induced (with the smallest possible structure group).

In particular to the cobordism class $[f]$ of a $\tau$--map $f\co M^n\lra R^{n + k}$ we can associate the element in $\Imm^{\widetilde\xi_\eta}(m)$ represented by the restriction of~$f$ to its $\eta$--stratum.
(Here $m$ is the dimension of the $\eta$--stratum.)
Hence a homomorphism $\Cob_\tau(n) \lra \Imm^{\widetilde\xi_\eta}(m)$ arises.
Both these groups are homotopy groups (of $X_\tau$ and $\Gamma T\widetilde\xi_\eta$ respectively).
It turns out that this map is induced by a map of the classifying spaces
$X_\tau \lra \Gamma T\wt\xi_\eta$.
Moreover the latter is a Serre fibration with (homotopy) fiber $X_{\tau'}$.
(This was shown in \cite{Sz3} using some nontrivial homotopy theory.
Terpai in \cite{T} gave an elementary proof for it.
This fibration is called the ``key bundle''.)

\section{Computation of the groups {$\Cob\,\Sigma^{1,0}(n)$}}
\label{sec:5}

In the case of fold maps $\tau = \{\Sigma^0, \Sigma^{1,0}\}$ where $\Sigma^0$ denotes the germ of maximal rank and $\Sigma^{1,0}$ denotes that of a Whitney umbrella $(R^2, 0)\lra (R^3, 0)$ (multiplied by the germ of identity $(R^{n - 2}, 0) \lra (R^{n - 2}, 0)$).

Hence here $\eta = \Sigma^{1,0}$ and $\tau' = \Sigma^0$.
Note that a $\tau'$--map is nothing else but an immersion (cooriented and of codimension~$1$).
Hence $X_{\tau'} = \Gamma S^1$.
Now the key bundle looks as follows:
\[
X\Sigma^{1,0}\ \overset{\Gamma S^1}{\hbox to 12mm{\rightarrowfill}}\ \Gamma T\wt\xi_{\Sigma^{1,0}}\,.
\tag{$**$}
\]
It is not hard to see (see also \cite{RSz}) that the bundle $\wt\xi_{\Sigma^{1,0}}$ is $2\varepsilon^1 \oplus \gamma^1$, and so $T\wt\xi_{\Sigma^{1,0}} = S^2 RP^\infty$.

Now the bundle $(**)$ gives the following exact sequence of homotopy groups:
\[
\aligned
\pi_{n + 1}(\Gamma S^1)
&\lra \pi_{n + 1}(X\Sigma^{1,0}) \lra \pi_{n + 1} (\Gamma S^2 RP^\infty) \overset\partial\lra \pi_n(\Gamma S^1) \ \text{ i.e.}\\
\pi^s(n)
& \lra \Cob\, \Sigma^{1,0}(n) \lra \pi_{n - 1}^s(RP^\infty) \overset\partial\lra \pi^s(n - 1) \lra
\endaligned
\]

\begin{lem}
\label{lem:3}
The boundary map $\partial$ coincides with the map $8_*$ and hence with the Kahn--Priddy homomorphism $\lambda_*$.
\end{lem}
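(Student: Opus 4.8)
The plan is to unwind the connecting homomorphism $\partial$ of the key bundle $(**)$ into a concrete geometric operation on cobordism classes of immersions and to recognise that operation as Koschorke's figure~$8$ construction; once $\partial = 8_*$ is proved, $\partial = \lambda_*$ follows from Theorem~\ref{th:2}. First one fixes the relevant identifications. The suspension isomorphism $\pi_{n+1}(\Gamma S^2 RP^\infty) \approx \pi_{n-1}^s(RP^\infty)$ corresponds geometrically to compressing an immersed $\Sigma^{1,0}$--stratum: a class in $\pi_{n+1}(\Gamma T\wt\xi_{\Sigma^{1,0}})$ is represented by a cooriented codimension one immersion $g \co N^{n-2}\looparrowright R^{n-1}$, with $R^{n-1}\hookrightarrow R^{n+1}$ and the normal line bundle $\zeta^1$ of $g$ enlarged to $2\varepsilon^1\oplus\zeta^1 = \wt\xi_{\Sigma^{1,0}}|_N$. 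On the other side, $\pi_n(\Gamma S^1)\approx\pi^s(n-1)$ is the cobordism group of framed (equivalently, cooriented and with trivial normal bundle) codimension one immersions of $(n-1)$--manifolds in $R^n$ --- the group in which $8_*(g)$ lives.

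Next one needs the geometric meaning of $\partial$. From the construction of the key bundle in \cite{Sz3} (elementary version in \cite{T}) one knows that, under the classifying--space identifications, the map $(**)$ records the $\Sigma^{1,0}$--stratum of a fold map together with its normal data, and the fibre inclusion $\Gamma S^1\hookrightarrow X\Sigma^{1,0}$ realises immersions as the fold maps with empty singular set. Exactness then forces $\partial$ to assign to an immersed $\Sigma^{1,0}$--stratum the codimension one immersion cut out by the universal local fold model on the linking sphere bundle of the stratum --- the \emph{link} of the singularity. It remains to compute this link in the local model $w\times\mathrm{id}_{R^{n-2}}$ of a $\Sigma^{1,0}$ point, where $w(x,y) = (x,xy,y^2)$: the stratum is $\{0\}\times R^{n-2}$, and restricting $w$ to a small circle around the origin one obtains an immersed circle in $R^3$ with a single transverse double point (the common image of the two points on the $y$--axis), that is, a figure~$8$, invariant under the reflection $(a,b,c)\mapsto(a,-b,c)$ whose $(-1)$--axis is exactly the $\gamma^1$--summand of $\wt\xi_{\Sigma^{1,0}}$ --- the line along which the two sheets of the Whitney umbrella are interchanged. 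Globalising over $N$, the link of the $\Sigma^{1,0}$--stratum is the fibrewise figure~$8$ placed in the fibres of $\varepsilon^1\oplus\zeta^1$ symmetrically with respect to the reflection in $\zeta^1$, whose total space is $S(\varepsilon^1\oplus\zeta^1)$; this is precisely Koschorke's figure~$8$ construction applied to $g$. Hence $\partial = 8_*$, and by Theorem~\ref{th:2} $\partial = \lambda_*$.

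The figure--eight computation itself is classical (the link of a cross-cap point is a figure eight), so the real content is the geometric identification of $\partial$ with the link construction: one must read off from the (homotopy--theoretic) construction of the key bundle in \cite{Sz3}, \cite{T} that the universal fold model near the stratum, restricted to its linking circle bundle, is the immersion $\Gamma S^1$--classified by $\partial[g]$, and one must keep straight the two dimension shifts involved --- the compression of the immersed stratum from $R^{n+1}$ to $R^{n-1}$ together with its normal bundle, and the degree shift in $\partial\co\pi_{n-1}^s(RP^\infty)\to\pi^s(n-1)$ --- as well as check that the link carries the coorientation and framing that make it cobordant to the oriented total space $S(\varepsilon^1\oplus\zeta^1)$ of Koschorke's construction. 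Once $\partial$ is pinned down as the link construction, the identification with $8_*$ is forced by inspection of the local model, and I expect this bookkeeping to be the only real obstacle.
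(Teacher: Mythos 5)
Your overall strategy is the right one and matches the paper's: interpret $\partial$ as the operation that takes the immersed $\Sigma^{1,0}$--stratum to the ``link'' of that stratum, identify the link in the local Whitney--umbrella model as a figure~$8$, and conclude via Koschorke's theorem. Your local computation (restricting $w(x,y)=(x,xy,y^2)$ to a small circle yields an immersed circle in $R^3$ with a single transverse double point, i.e.\ a figure~$8$, symmetric under the reflection in the $\gamma^1$--direction) is correct and is essentially the content of the paper's Claim~b).

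The genuine gap is the step you summarize as ``Exactness then forces $\partial$ to assign\dots the link of the singularity.'' Exactness determines the kernel and the image of $\partial$, not $\partial$ itself; no amount of exactness pins down the boundary homomorphism as a specific geometric operation. You do later acknowledge that the identification of $\partial$ with the link construction is ``the real content,'' but you then dismiss it as ``bookkeeping.'' In the paper this is Claim~a), and it is the heart of the proof, not an afterthought: one must realize $\pi_{n+1}(X\Sigma^{1,0},\Gamma S^1)$ as a relative cobordism group of fold maps $f\co(M^n,\partial M^n)\to(D^{n+1},S^n)$, observe that $\partial[f]=[\partial f]$, and then produce an actual framed immersion cobordism between $\partial f\co\partial M\looparrowright S^n$ and the union of curved figures~$8$ forming the link of the singular set. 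The cobordism is carried by $W=M\setminus T$, and the nontrivial point is to arrange the framings so that $W$ really is a framed cobordism in the sense of $\pi^s(n-1)$; this is where the Rourke--Sanderson multicompression theorem enters. (The multicompression theorem is also needed in Claim~b) to convert the curved figures~$8$ on the link sphere bundle into the flat ones of Koschorke's construction, and to compress the codimension--$3$ immersion in $D^{n+1}$ to the codimension--$1$ immersion in $R^{n-1}$ on which $8_*$ is defined.) Without this cobordism argument, the assertion that $\partial$ equals the link construction is unsupported.
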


\begin{proof}[Proof of Theorem~\ref{th:A}]
is immediate from Theorem~\ref{th:1}, Theorem~\ref{th:2} and Lemma~\ref{lem:3}.
\end{proof}

\begin{proof}[Proof of Lemma~\ref{lem:3}]
The boundary map $\partial\co \pi_{n + 1}^s(S^2 RP^\infty) \approx \pi_{n + 1}(X\Sigma^{1,0}, \Gamma S^1) \lra \pi_n(\Gamma S^1)$
can be interpreted geometrically as follows:

The source group $\pi_{n + 1}(X\Sigma^{1,0}, \Gamma S^1)$ is isomorphic to the cobordism group of fold maps
\[
f\co (M^n, \partial M^n) \lra(D^{n + 1}, S^n) \ \text{ such that } \ f^{-1}(S^n) = \partial M^n,
\]
and the map $\partial f = f\raisebox{-3pt}{$\Big|$}_{\partial M^n}$ is an immersion of $\partial M^n$ into $S^n$.
(Here $M^n$ is an oriented compact smooth $n$--dimensional manifold with boundary $\partial M^n$.)

Let $[f]$ denote the (relative) cobordism class of~$f$, and let $[\partial f]$ be that of the immersion $\partial f\co \partial M^n \lra S^n$.
Then $\partial[f] = [\partial f]$.

Now let $V$ denote the set of singular points of~$f$.
This is a submanifold of $M^n$ of codimension~$2$.
The restriction of $f$ to $V$ is an immersion, its image we denote by $\wt V$ $(=f(V))$.
Let $\wt T$ be the (immersed) tubular neighbourhood of~$\wt V$.
More precisely there exist a $D^3$--bundle $T' \lra V$ over $V$, a submersion $F$ of $T'$ into $D^n$, $(F(T') = \wt T)$ and $F$ extends the immersion $f \mid V\co V \lra D^n$.
The bundle $T' \lra V$ has the form $2\varepsilon^1 \oplus \zeta^1$, where $\zeta^1$ is a line bundle.

Let $T$ be the tubular neighbourhood of $V$ in $M$ such that $f(T) \subset \wt T$.
The map $f\bigr|_T \co T \lra \wt T$ can be decomposed into a map $\wh f\co T \lra T'$ and the submersion $F\co T'\lra \wt T$, where $\wh f$ maps each fiber $D^2$ of the bundle $T \lra V$ into a fiber $D^3$ of $T' \lra V$ as a Whitney umbrella and $\wh f^{-1} (\partial T') = \partial T$.
On the boundary of each fiber $D^3$ we obtain a ``curved figure 8'' as image of~$\wh f$.
The manifold with boundary $M\setminus T$ will be denoted by $W$.
Note that its boundary is $\partial W = \partial_1 W \raisebox{-3pt}{$-$}\kern-11.43pt\shortparallel \partial_2 W$, where $\partial_1 W = \partial M$, and $\partial_2 W = \partial T$.

The image of $\partial_2W$ at $f$ is the union of the ($F$--images of the) above mentioned curved figures~8.
This is a codimension $2$ framed immersed submanifold in $D^{n + 1}$, we will denote it by $\wt V$.
(The first framing is the $F$--image of the inside normal vector of $\partial T'$ in $T'$.
The second framing is the normal vector of the curved figure~8 in $S^2 = \partial D^3$.)

It remained to show the following two claims.

\begin{claima}
\label{cl:a}
$\wt V$ with the given $2$--framing is framed cobordant to the immersion $\partial f\co \partial M \looparrowright S^n$ (compared with the framed embedding $S^n \subset D^{n + 1}$).
\end{claima}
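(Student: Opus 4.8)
\emph{Proof proposal for Claim~a).}

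The plan is to build the required framed cobordism directly out of the complement manifold $W=M\setminus\mathrm{int}\,T$ and the immersion $f\bigr|_W$. Since every singular point of $f$ lies in $V\subset\mathrm{int}\,T$, the map $f\bigr|_W\co W\lra D^{n+1}$ is an honest immersion of the compact oriented $n$--manifold $W$; it is cooriented (being a restriction of the cooriented $f$), and $\partial W=\partial_1W\sqcup\partial_2W=\partial M\sqcup\partial T$ disjointly (indeed $V$, hence $T$, lies in $\mathrm{int}\,M$, as $\partial f$ is an immersion), with $f(\partial_1W)=\partial f(\partial M)\subset S^n$ and $f(\partial_2W)=\wt V$. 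To view $f\bigr|_W$ as a cobordism \emph{inside} $D^{n+1}\times[0,1]$ I would pick a smooth $h\co W\lra[0,1]$ with $h^{-1}(0)=\partial_2W$, $h^{-1}(1)=\partial_1W$, no critical points near $\partial W$, equal to the collar coordinate there, and set $\Psi:=(f,h)\co W\lra D^{n+1}\times[0,1]$. Then $\Psi$ is an immersion, $\Psi(\partial_2W)=\wt V\times\{0\}$, $\Psi(\partial_1W)=\partial f(\partial M)\times\{1\}\subset S^n\times\{1\}$, and $\Psi(W)$ is a codimension~$2$ immersed $n$--manifold connecting the two submanifolds of the statement.

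Next comes the framing. From $f^\ast TD^{n+1}\bigr|_W=TW\oplus\nu_f$, where $\nu_f$ is the oriented coorientation line of $f\bigr|_W$, one gets a canonical isomorphism of the normal bundle of $\Psi(W)$ in $D^{n+1}\times[0,1]$ with $\nu_f\oplus\varepsilon^1$, i.e.\ a natural $2$--framing of $\Psi(W)$, with first vector represented by (the push--up of) the coorientation normal of $f\bigr|_W$ and second by the $[0,1]$--direction. Along $\partial_1W$ one has $TW\bigr|_{\partial_1W}=T(\partial M)\oplus\varepsilon^1$, so the rank~$2$ normal bundle of the immersion $\partial f$ in $D^{n+1}$ is $\varepsilon^1\oplus\nu_f\bigr|_{\partial_1W}$, which is exactly the restriction of the above framing; since $f$ is transverse to $S^n=\partial D^{n+1}$ (because $f^{-1}(S^n)=\partial M$), this restricted framing is homotopic to the prescribed one --- the inward normal of $S^n$ in $D^{n+1}$ together with the coorientation of $\partial f$ inside $S^n$. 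Similarly along $\partial_2W$ one has $TW\bigr|_{\partial_2W}=T(\partial T)\oplus\varepsilon^1$, whence the normal bundle of $\wt V=f(\partial T)$ in $D^{n+1}$ is again $\varepsilon^1\oplus\nu_f\bigr|_{\partial_2W}$, the restriction of the framing; so everything reduces to identifying this framing, up to homotopy, with the $2$--framing prescribed for $\wt V$.

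This last identification is the heart of the matter, and I expect it to be the main obstacle. It is a local computation in the fibrewise Whitney umbrella model $D^2\lra D^3$ over $V$, using the local diffeomorphism $F\co T'\lra\wt T$ to put the picture inside the $D^3$--fibres. In one such fibre, $\wt V$ is the curved figure~$8$ on $S^2=\partial D^3$, the umbrella surface $f(T)$ sits on the $D^3$--side of it, and $f(W)$ is the sheet continuing past the figure~$8$ on the other side; the inward normal of $\partial T$ in $W$, pushed by $f$, is then the radial (normal--to--$S^2$) direction, matching --- up to the sign ``inside versus outside $\wt T$'' --- the first prescribed vector, the $F$--image of the inward normal of $\partial T'$ in $T'$, while the coorientation line $\nu_f$ of $f(W)$, read along the figure~$8$, is the cooriented normal of the figure~$8$ inside $S^2$, the second prescribed vector. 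In each fibre the two $2$--framings thus differ by a fixed element of $O(2)$, and since the normal bundle of $V$ in $M$, the umbrella map, and the universal bundle $\wt\xi_{\Sigma^{1,0}}$ carry a coherent structure over $V$, these fibrewise differences patch together and one finds that the two framings of $\wt V$ agree up to homotopy, possibly after reversing the orientation of one of them. Once this is established, $\Psi(W)$ with its framing is the desired framed cobordism and Claim~a) follows. (A residual orientation sign here, if it occurs, is harmless for Theorem~\ref{th:A}, since $\ker\lambda_*=\ker(-\lambda_*)$.)
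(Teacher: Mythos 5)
Your proposal takes a genuinely different route from the paper's. The paper removes a small ball to view the target as $S^n\times I$, stabilizes by crossing with $\mathbb{R}^q$ so that $f\bigr|_W$ becomes an \emph{embedding}, applies the Rourke--Sanderson multicompression theorem to rotate the $\partial\wt T$--framing vector into the $I$--direction while fixing $\partial_1 W$, pushes $\partial_2 W$ up to level $1$ by a vertical shift, and then projects back; the framed cobordism is obtained by a continuous deformation of the original data, so the framing travels along for free and no explicit comparison of framings at $\partial_2 W$ is ever made. You instead build the cobordism in one stroke as $\Psi=(f,h)\co W\to D^{n+1}\times[0,1]$, which is cleaner and avoids both the stabilization trick and the multicompression theorem, but it shifts the entire burden onto an explicit identification of the induced framing of $\wt V$ with the prescribed $2$--framing --- exactly the step the paper's deformation argument is engineered to sidestep.

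That identification, which you yourself flag as ``the heart of the matter,'' is where the gap lies. Your argument that ``in each fibre the two $2$--framings differ by a fixed element of $O(2)$'' and that these fibrewise elements ``patch together'' because the umbrella and $\wt\xi_{\Sigma^{1,0}}$ carry ``a coherent structure over $V$'' is an assertion, not a proof. To make it precise you would need to (i) write down the isomorphism $\nu\bigl(\Psi(W),D^{n+1}\times[0,1]\bigr)\bigr|_{\partial_2 W}\cong\nu(\wt V,D^{n+1})$ explicitly --- note that under this isomorphism the $[0,1]$--vector $\varepsilon^1$ of your framing does \emph{not} go to itself but to the negative of the cobordism direction, i.e.\ to the $F$--image of the inward normal of $\partial T'$, so the two framings actually come out in the opposite order --- and (ii) prove in the Whitney-umbrella normal form that $\nu_f$ restricted to the figure $8$ really is the normal of the figure~$8$ in $S^2$, and that this identification depends continuously (indeed, constantly in $O(2)$) on the point of $V$, not just in each fibre separately. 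Point (ii) is true but it is a computation in the normal form, not a formality; point (i) produces a swap, hence a sign, which as you correctly observe is harmless for $\ker\lambda_*$ but means your argument as written proves $\partial[f]=\pm[\wt V]$ rather than the claim verbatim. So the approach is sound and arguably more elementary than the paper's, but as it stands the last paragraph is a plausibility sketch, not a proof, and a reader cannot reconstruct the missing local computation from what is written.
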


\begin{claimb}
\label{cl:b}
$\wt V$ is obtained from the immersion $f\big| V\co V^{n - 2} \looparrowright D^{n + 1}$ by the figure 8 construction.
\end{claimb}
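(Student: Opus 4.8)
The plan is to verify the two claims (a) and (b) separately, since together they immediately identify the boundary map $\partial$ with the figure 8 construction $8_*$: Claim (b) says that $\wt V$ (with its induced $2$--framing) is precisely the image of $f|_V$ under the figure 8 construction, and Claim (a) says that this same framed immersion is cobordant to $\partial f\co \partial M \looparrowright S^n$, which represents $\partial[f]$ by the geometric description of the connecting homomorphism. Recalling that the singular stratum $V$ of a fold map, restricted along $f$, represents the class in $\Imm^{\wt\xi_{\Sigma^{1,0}}}(n-2) = \pi_{n-1}^s(RP^\infty)$ that the middle map in $(**)$ sends $[f]$ to, combining the two claims gives exactly $\partial = 8_*$, hence $\partial = \lambda_*$ by Koschorke's Theorem~\ref{th:2}.

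For Claim (b), I would argue directly from the normal data. The composition $V^{n-2} \overset{f|_V}{\looparrowright} D^n \hookrightarrow D^{n+1} \hookrightarrow D^{n+1}$ — wait, more precisely $V \looparrowright \wt V \subset D^n$ and then $D^n \hookrightarrow D^{n+1}$ via the boundary inclusion of a hemisphere — has normal bundle $\varepsilon^1 \oplus \nu$, where $\nu$ is the normal line bundle of the immersion $f|_V$ in $D^n$ and $\varepsilon^1$ is the extra coordinate. Now $\nu$ must be identified with the line bundle $\zeta^1$ appearing in $T' = 2\varepsilon^1 \oplus \zeta^1$: indeed $\zeta^1$ is by construction the ``twisting'' line bundle of the Whitney umbrella family, and one checks (this is essentially the computation that $\wt\xi_{\Sigma^{1,0}} = 2\varepsilon^1 \oplus \gamma^1$, cf.\ \cite{RSz}) that it coincides with the normal line bundle of the singular stratum in the target. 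The key point is then that $\wh f^{-1}(\partial T')$ is a ``curved figure 8'' in each $D^3 = $ fiber, placed symmetrically with respect to the reflection in the $\zeta^1$--direction — which is exactly the defining recipe of Koschorke's construction applied to the normal bundle $\varepsilon^1 \oplus \nu = \varepsilon^1 \oplus \zeta^1$. Matching the two framings (the inward normal of $\partial T'$, and the normal of the figure 8 in $S^2 = \partial D^3$) with the framings prescribed by the figure 8 construction is then a local bundle computation.

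For Claim (a), the idea is that $W = M \setminus T$ provides the framed cobordism. Over $W$ the map $f$ is an immersion (we have removed a neighbourhood of all singular points), so $f|_W \co W \looparrowright D^{n+1}$ is an immersed cobordism between its two boundary pieces $f|_{\partial_1 W} = \partial f \co \partial M \looparrowright S^n \subset D^{n+1}$ and $f|_{\partial_2 W} \co \partial T \looparrowright D^{n+1}$, whose image is exactly $\wt V$. The normal bundle of this codimension $2$ immersion restricts compatibly on the two ends, so pushing the framings along $f$ exhibits the framed cobordism asserted in Claim (a); one must check that the $2$--framing on $\wt V$ described above (inward normal to $\partial T'$ plus figure 8 normal) extends over $f(W)$ to the standard framing on $\partial f$, which again reduces to tracking the trivialization $\wt\xi_{\Sigma^{1,0}}|_{\text{away from }V} \cong 2\varepsilon^1$.

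I expect the main obstacle to be Claim (a): making precise the claim that the framing on $\partial_2 W$ coming from the curved figures 8 glues across $W$ to the standard framing on $\partial M \subset S^n$. This requires carefully setting up the immersed cobordism $f|_W$ together with a coherent choice of normal framing over all of $W$, and verifying that near $\partial T$ this framing degenerates precisely to the two vectors (inward $\partial T'$ normal, figure 8 normal) rather than some twisted version of them — a point where the orientation of $M$ and the coorientation in $D^{n+1}$ must be used to rule out a sign. By contrast Claim (b) is essentially a matter of unwinding definitions once the identification $\nu \cong \zeta^1$ is in hand.
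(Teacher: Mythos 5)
Your argument for Claim~b) does not engage with the actual geometric obstruction that the paper has to overcome, and parts of the setup are incorrect.

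First, the setup. You write ``$V \looparrowright \wt V \subset D^n$ and then $D^n \hookrightarrow D^{n+1}$ via the boundary inclusion of a hemisphere,'' and conclude that the relevant normal bundle is a rank-$2$ bundle $\varepsilon^1 \oplus \nu$. But there is no natural $D^n$ in the picture: $f$ maps $M^n$ directly into $D^{n+1}$, and the singular stratum $V^{n-2}$ is immersed in $D^{n+1}$ by $f|_V$ with codimension $3$, not $2$. Its normal bundle is the rank-$3$ bundle $2\varepsilon^1 \oplus \zeta^1$ (this is the induced $\wt\xi_{\Sigma^{1,0}}$). Consequently the figure~8 construction, which is defined for codimension-one immersions, does not literally apply to $f|_V\co V^{n-2} \looparrowright D^{n+1}$; the claim only makes sense after the two trivial normal directions are used, via the Rourke--Sanderson multicompression theorem, to compress and then project $V$ to a codimension-one immersion $g\co V^{n-2} \looparrowright R^{n-1}$. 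Your writeup never performs (or even mentions) this compression-and-projection step for Claim~b), even though it is what gives the statement its meaning.

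Second, and more substantively, the heart of the paper's proof is an isotopy argument that you skip. The union of curved figures~$8$ sits in the $2$-spheres $\partial D^3$ (boundaries of the $D^3$-fibers of $T'$), not in planar $2$-dimensional fibers $\varepsilon^1 \oplus \zeta^1$ as the Koschorke recipe requires. You assert these curved figures~$8$ are ``exactly the defining recipe of Koschorke's construction,'' but that is not yet true: one must first contract each curved figure~$8$ along its $S^2$ into a small neighbourhood of its double point, obtaining an almost-flat figure~$8$, and then identify the relevant normal directions ($\varepsilon_1^1$ as the double-line direction of the umbrella, $\varepsilon_2^1$ as the symmetry axis of the flattened figure~$8$, $\zeta^1$ orthogonal to both) before the multicompression-and-projection puts the flattened figures~$8$ exactly where the figure~8 construction applied to $g$ would put them. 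Without the flattening isotopy and the explicit identification of these three normal line fields, the comparison with Koschorke's construction is not established, and your concluding remark that Claim~b) is ``essentially a matter of unwinding definitions'' underestimates what has to be done; the real content is precisely the isotopy and the compression, together with the (easy but necessary) observation that these deformations preserve the framed cobordism class.
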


We have to make some remarks in order to clarify the above statements a) and b).

To a): The framed immersion $\partial f\co \partial M \looparrowright S^n$ and its composition with $i : S^n \subset D^{n + 1} \subset R^{n + 1}$ (with the added second framing, the inside normal vectors of $S^n$ in $D^{n + 1}$) represent the same element in $\pi^s(n - 1)$.
Indeed, the composition with $i$ corresponds to applying the suspension homomorphism in homotopy groups of spheres.
But the cobordism group of framed \emph{immersions} is isomorphic to the corresponding \emph{stable} homotopy group of spheres, so the suspension homomorphism gives the identity map of these groups.

To b): The figure 8 construction was defined for a codimension one immersed submanifold in a Euclidean space.
Here we apply it to the codimension~3 immersed submanifold $\wt V^{n - 2}$ in $D^{n + 1}$.
But $\wt V^{n - 2}$ has two linearly independent normal vector fields in $D^{n + 1}$ as was described above.
Identify $D^{n + 1}$ with $R^{n + 1}$ and apply the so-called multicompression theorem by Rourke--Sanderson \cite{RS}, thus one can make the two normal vectors parallel to the last two coordinate axes in $R^{n + 1}$, we can project the immersion to $R^{n - 1}$ and then we have a codimension $1$--immersion, so claim b) makes sense.
(This needs some more clarification since the multicompression theorem deals with embeddings. See below.)

\begin{proof}[Proof of Claim a)]
It can be supposed that the center $c$ of $D^{n + 1}$ does not belong to $\partial \wt T \cup f(M)$.
Let us omit from $D^{n + 1}$ a small ball centered around $c$ and still disjoint from $\partial \wt T \cup f(M)$.
In the remaining manifold $S^n \times I$ the direction of $I$ will be called vertical.
Take the product with an $R^q$ for big enough $q$, so that the immersions
$f\big|_W \co W^n \looparrowright S^n \times I$ and $\partial T \looparrowright S^n \times I$ become embeddings after small perturbations.

Now $W$ is embedded in $S^n \times I \times R^q$, it is framed with $q + 1$ normal vectors ($q$ are parallel to the coordinate axes of $R^q$).

One can suppose that the two boundary components of $W$ are embedded as follows

1) $\partial_1 W = \partial M$ is embedded into $S^n \times\{0\} \times R^q$.

2) $\partial_2 W = \partial T$ is embedded into the interior part $\text{\rm int}(S^n \times I) \times R^q$.

Both have $(q + 2)$-framings.

Now applying the multicompression theorem we make by an isotopy the first framing vector (the one coming from the normal vector of $\partial \wt T$ in $\wt T$) vertical, i.e.\ parallel to the direction of $I$ in $S^n \times I \times R^q$, while the $q$ last framing vectors (coming from $R^q$) we keep parallel to themselves.
The other boundary component $\partial_1 W \subset S^n \times \{0\} \times R^q$ is kept fixed.

We arrive at such an embedding of $W$ in $S^n \times I \times R^q$ for which the outward normal vector along $\partial_2 W$ in $W$ is vertical (i.e.\ parallel to the direction of $I$).
Now by a vertical shift we can deform $\partial_2 W$ into $S^n \times \{1\} \times R^q$.

This deformation can be extended to~$W$.
Projecting into $S^n \times I$ this new position of $W$ in $S^n \times I \times R^q$ we obtain an immersion cobordism between the immersions of the two boundary components.

On the first component $(\partial_1 W)$ we obtain $\partial [f]$.
On the second component we obtain the same framed cobordism class as was that of $\wt V$ in $\partial \wt T \subset D^{n + 1}$ (the union of curved figures 8).
Claim a) is proved.
\end{proof}

\begin{proof}[Proof of Claim b)]
Deform the immersed manifold $\wt V^{n - 1}$ (formed by the union of curved figures 8) as follows.
Contract each curved figure 8 by an isotopy along the corresponding sphere $S^2$ into a small neighbourhood of its double point obtaining an almost flat (very small) figure 8.
As we have noticed the normal bundle of $f(V)$ in $D^{n + 1}$ has the form $2\varepsilon ^1 \oplus \zeta^1 = \varepsilon_1^1 \oplus \varepsilon_2^1 \oplus \zeta^1$.
The first trivial normal line bundle $\varepsilon_1^1$ can be identified with the direction of the double line of the umbrella, the second one $\varepsilon_2^1$ can be the symmetry axes of the figures~8 (both of the curved and the flattened ones).
$\zeta^1$ is the direction orthogonal to the symmetry axes.

Now considering the maps of $V$ and $\wt V$ in $R^{n + 1}$ (instead of $D^{n + 1}$) applying again the multicompression theorem we make the two trivial normal directions $\varepsilon_2^1$ and $\varepsilon_1^1$ parallel to the last two coordinate axes and then project $V$ to $R^{n - 1}$.
In this way we obtain a new immersion $g\co V^{n - 2} \looparrowright R^{n - 1}$ and $\varepsilon_2^1$ (the symmetry axes of the figures~8) will be parallel to the normal vector of $R^{n - 1}$ in $R^n$.
Now the (flattened) figures~8 (of $\wt V$) are placed exactly as by the original figure~8 construction applied to~$g$.

It remained to note that the described deformations do not change the cobordism class of a framed immersion.
Claim~b) is proved.
\end{proof}

Thus Theorem~\ref{th:A} is also proved.
\end{proof}

\begin{rem}
The stable homotopy groups of $RP^\infty$ were computed by Liulevicius \cite{Liu} in dimensions not greater than~$9$.
\end{rem}

Below in the first line we show his result, in the second one the stable homotopy groups of spheres.
These two lines by Theorem~\ref{th:A} give the groups $\Cob\, \Sigma^{1,0}(n)$ for $n \leq 10$ given in the third line. (Here for example $(Z_2)^3$ stands for $Z_2 \oplus Z_2 \oplus Z_2$.)

\begin{tabular}{@{\hspace*{8 pt}}c@{\hspace*{7.5 pt}}|@{\hspace*{7.5 pt}}c@{\hspace*{7.5 pt}}|@{\hspace*{7.5 pt}}c@{\hspace*{7.5 pt}}|@{\hspace*{7.5 pt}}c@{\hspace*{7.5 pt}}|@{\hspace*{7.5 pt}}c@{\hspace*{7.5 pt}}|@{\hspace*{7.5 pt}}c@{\hspace*{7.5 pt}}|@{\hspace*{7.5 pt}}c@{\hspace*{7.5 pt}}|@{\hspace*{7.5 pt}}c@{\hspace*{7.5 pt}}|@{\hspace*{7.5 pt}}c@{\hspace*{7.5 pt}}|@{\hspace*{7.5 pt}}c@{\hspace*{8 pt}}|@{\hspace*{7.5 pt}}c@{\hspace*{7.5 pt}}}
$n$ & $1$ & $2$ & $3$ & $4$ & $5$ & $6$ & $7$ & $8$ & $9$ & $10$ \\
\hline
$\pi_n^s(RP^\infty)$ & $Z_2$ & $Z_2$ & $Z_8$ & $Z_2$ & $0$ & $Z_2$ & $Z_{16} \oplus Z_2$ &
$(Z_2)^3$ & $(Z_2)^4$ & ? \\
\hline
$\pi^s(n)$ & $Z_2$ & $Z_2$ & $Z_{24}$ & $0$ & $0$ & $Z_2$ & $Z_{240}$ & $(Z_2)^2$ & $(Z_2)^3$ & $Z_6$ \\
\hline
$\Cob\, \Sigma^{1,0}(n)$ & $0$ & $0$ & $Z_3$ & $Z_2$ & $Z_2$ & $0$ & $Z_{15}$ & $Z_2$ & $Z_2$ & $Z_6$ \\
\hline
\end{tabular}

\part{Cusp maps}
\label{part:II}

\section{Formulation of the results}
\label{sec:II1}

Here we consider cusp maps, i.e.\ maps having at most cusp singularities.
(In the previous terms these are $\tau$--maps for $\tau = \{\Sigma^0, \Sigma^{1,0}, \Sigma^{1,1}\}$.)
The cobordism group of cusp maps of oriented $n$--dimensional manifolds in $R^{n + 1}$ will be denoted by $\Cob\, \Sigma^{1,1}(n)$.
We shall compute these groups modulo their $2$--primary and $3$--primary parts.
Let $C_{\{2, 3\}}$ be the minimal class of groups containing all $2$--primary and $3$--primary groups.

\renewcommand{\thethm}{\Alph{thm}}
\setcounter{thm}{1}

\begin{thm}
\label{th:B}
\hspace*{60pt}$\Cob\, \Sigma^{1,1}(n) \underset{\mathcal C_{\{2,3\}}}{\approx} \pi^s(n) \oplus \pi^s(n - 4)$\\
where $ \underset{\mathcal C_{\{2,3\}}}{\approx}$ means isomorphism modulo the class $\mathcal C_{\{2,3\}}$, and $\pi^s(m)$ denotes the $m$\textsuperscript{th}  stable homotopy group of spheres.
\end{thm}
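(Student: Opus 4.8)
The plan is to run the ``key bundle'' of Section~\ref{sec:4} for $\tau=\{\Sigma^0,\Sigma^{1,0},\Sigma^{1,1}\}$ and to feed in Part~\ref{part:I}. Its maximal element is $\eta=\Sigma^{1,1}$, while $\tau'=\{\Sigma^0,\Sigma^{1,0}\}$ is precisely the fold data, so the key bundle is a fibration $X\Sigma^{1,0}\lra X\Sigma^{1,1}\lra\Gamma T\wt\xi_{\Sigma^{1,1}}$. The $\Sigma^{1,1}$ ($=A_2$) stratum of a generic cooriented codimension~one map has codimension $4$ in the source, so its image is immersed of codimension $5$ and $\wt\xi_{\Sigma^{1,1}}$ has rank $5$; in particular $T\wt\xi_{\Sigma^{1,1}}$ has bottom cell in dimension $5$. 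Since the spaces $X_\tau$ are infinite loop spaces and the key bundle is $\Omega^\infty$ of a cofibre sequence of spectra (cf.\ \cite{Sz3}), we get a cofibre sequence
\[
\mathbf X_{\Sigma^{1,0}}\ \lra\ \mathbf X_{\Sigma^{1,1}}\ \lra\ \Sigma^\infty T\wt\xi_{\Sigma^{1,1}}\ \overset{\delta}\lra\ \Sigma\,\mathbf X_{\Sigma^{1,0}},
\]
where $\mathbf X_\bullet$ is the spectrum with $\Omega^\infty\mathbf X_\bullet\simeq X_\bullet$, $\mathbb S=\Sigma^\infty S^0$, and $\pi_{n+1}(\mathbf X_{\Sigma^{1,1}})=\Cob\,\Sigma^{1,1}(n)$. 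Writing $(-)[1/6]$ for localisation away from the primes $2$ and $3$, it suffices to produce the claimed isomorphism after applying $(-)[1/6]$.

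For the fibre, the fold key bundle $(**)$ of Section~\ref{sec:5} is $\Omega^\infty$ of a cofibre sequence $\Sigma\mathbb S\to\mathbf X_{\Sigma^{1,0}}\to\Sigma^\infty(S^2RP^\infty)$; since $\widetilde H_*(RP^\infty;\mathbb Z)$ is $2$--primary, $(\Sigma^\infty RP^\infty)[1/6]\simeq 0$, whence $\mathbf X_{\Sigma^{1,0}}[1/6]\simeq\Sigma\mathbb S[1/6]$ (equivalently $\Cob\,\Sigma^{1,0}(m)\underset{\mathcal C_{\{2,3\}}}{\approx}\pi^s(m)$, which also follows at once from Theorem~\ref{th:A}). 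For the base, here is the step I expect to carry the weight: one must identify $\wt\xi_{\Sigma^{1,1}}$ -- concretely, its structure group $G$, the maximal compact subgroup of the automorphism group of the cusp germ in the cooriented codimension~one situation -- and verify that $G$ is a \emph{finite group of order $2^{a}3^{b}$}. This is also where the prime $3$ has to enter, the cusp being a cubic singularity. Granting it, $\widetilde H_*(BG;\mathbb Z[1/6])$ vanishes in positive degrees, so by the Thom isomorphism $\widetilde H_*(\Sigma^\infty T\wt\xi_{\Sigma^{1,1}};\mathbb Z[1/6])$ is $\mathbb Z[1/6]$ concentrated in degree $5$, and hence $\Sigma^\infty T\wt\xi_{\Sigma^{1,1}}[1/6]\simeq\Sigma^5\mathbb S[1/6]$.

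It remains to kill the attaching map. After localising, the cofibre sequence becomes $\Sigma\mathbb S[1/6]\to\mathbf X_{\Sigma^{1,1}}[1/6]\to\Sigma^5\mathbb S[1/6]\overset{\delta}\to\Sigma^2\mathbb S[1/6]$, so $\delta$ is a class in $[\Sigma^5\mathbb S,\Sigma^2\mathbb S]\otimes\mathbb Z[1/6]=\pi^s(3)\otimes\mathbb Z[1/6]=Z_{24}\otimes\mathbb Z[1/6]=0$. Thus $\delta$ is null, the sequence splits as a wedge $\mathbf X_{\Sigma^{1,1}}[1/6]\simeq\Sigma\mathbb S[1/6]\vee\Sigma^5\mathbb S[1/6]$, and $\pi_{n+1}$ gives $\Cob\,\Sigma^{1,1}(n)[1/6]\cong\pi^s(n)[1/6]\oplus\pi^s(n-4)[1/6]$, as required. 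Passing to spectra is essential here: the long exact homotopy sequence of the key bundle only exhibits $\Cob\,\Sigma^{1,1}(n)$, modulo $\mathcal C_{\{2,3\}}$, as an extension of $\pi^s(n-4)$ by $\pi^s(n)$, and such an extension of finite groups of order prime to $6$ need not split, whereas a null attaching map of spectra yields a genuine wedge. (Geometrically $\delta$ is a cusp analogue of Koschorke's figure~8 construction -- thicken a framed immersion of an $(n-4)$--manifold by the standard cusp model in each normal $5$--disc and restrict the resulting cusp map to its fold boundary -- which manifestly factors through the $\{2,3\}$--group $G$; one expects $\delta$ to be $3$--locally nonzero, which is why $\mathcal C_{\{2,3\}}$, rather than $\mathcal C_{\{2\}}$, is the right Serre class. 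The genuine obstacle remains the determination of $G$, and hence of the homotopy type of $T\wt\xi_{\Sigma^{1,1}}$; once $G$ is known to be a $\{2,3\}$--group, everything else is formal.)
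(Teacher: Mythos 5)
Your proposal takes a genuinely different route from the paper, and it is worth comparing the two. The paper never passes to spectra. After showing (via \cite{RSz}, \cite{R}) that $\wt\xi_2$ has structure group $Z_2$, so that $\Gamma S^5\subset\Gamma T\wt\xi_2$ is a mod $\mathcal C_2$ equivalence, the paper replaces the key bundle by the pulled-back fibration $X^{\fr}\Sigma^{1,1}\to\Gamma S^5$ of \emph{spaces} and proves this has an algebraic $6$--splitting (Lemma~\ref{lem:4}): multiplying a framed immersion $\alpha\co A^{m-5}\looparrowright R^m$ by a fixed cusp map $M^4\to D^5$ with algebraically $6$ cusps --- which exists by Eccles--Mitchell and the triple-point/cusp dictionary of \cite{Sz1}, \cite{Sz2}, \cite{L} --- gives a homomorphism $S_i$ with $p_*\circ S_i=6\cdot\text{id}$. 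Inverting $6$ turns this into an honest section, so the extension problem is solved without any delooping. Your assertion that ``passing to spectra is essential'' because ``such an extension need not split'' is therefore mistaken: the paper supplies extra data (the section) that does split it at the space level. The geometric splitting is the heart of the paper's argument, and your proposal entirely bypasses it.

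This bypass is also where your gap lies, and it is more serious than the one you flag. You assume that the key bundle is $\Omega^\infty$ of a cofibre sequence of spectra $\mathbf X_{\Sigma^{1,0}}\to\mathbf X_{\Sigma^{1,1}}\to\Sigma^\infty T\wt\xi_{\Sigma^{1,1}}$, citing \cite{Sz3}. That reference (and Terpai's \cite{T}) establishes that the key bundle is a Serre fibration of spaces; it does not show that the $X_\tau$ carry compatible infinite loop structures making this map and its fibre inclusion into maps of spectra, which is what you need for $\delta$ to be a single element of $\pi^s(3)$. Without that, the connecting maps $\partial_n\co\pi_{n+1}(\Gamma S^5)\to\pi_n(X\Sigma^{1,0})$ are a priori unrelated to each other, and your clean appeal to $\pi^s(3)\otimes\mathbb Z[1/6]=0$ does not apply. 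This is precisely the hole the paper's $6$--splitting fills by geometric means.

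On the gap you do flag: the determination of the structure group of $\wt\xi_{\Sigma^{1,1}}$ is not open; the paper records (Section~\ref{sec:II2}, from \cite{RSz}, \cite{R}) that the structure group of $\wt\xi_r$ is $Z_2$ and $\wt\xi_2\cong i\gamma^1\oplus j\varepsilon^1$ with $i+j=5$, $i$ even, so $T\wt\xi_2\simeq S^j(RP^\infty/RP^{i-1})$. In particular the structure group is a $2$--group, not a $\{2,3\}$--group, and the prime $3$ does \emph{not} enter through the structure group. It enters, in the paper, through the count of cusps ($t(2)=6$, i.e.\ $2$ triple points in the image giving $6$ in the source); in your argument it would enter through the connecting map living in $\pi^s(3)=Z_{24}$. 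Your speculation that $3$ should appear because ``the cusp is a cubic singularity'' is a red herring in both approaches. Once the structure-group fact is invoked and the spectral structure of the key bundle is \emph{granted}, the rest of your argument is correct and pleasantly formal, but the spectral structure is an unestablished hypothesis, so as written the proposal has a real gap that the paper's geometric $6$--splitting is designed to avoid.
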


\renewcommand{\thethm}{\arabic{thm}}
\setcounter{thm}{3}

\section{Preliminaries on Morin maps}
\label{sec:II2}

Morin maps are those of types $\Sigma^{1,0}, \Sigma^{1,1,0}, \dots, \Sigma^{1_r, 0}, \dots$, $r = 1,2, \dots$ (See \cite{AGV}.)

For $\eta = \Sigma^{1_r,0}$ the universal normal bundle $\wt\xi_\eta$ will be denoted by $\wt\xi_r$.
It was established in \cite{RSz} and \cite{R} that the structure group of $\wt\xi_r$ is $Z_2$ and the bundle $\wt\xi_r$ is associated to a representation $\lambda_2\co Z_2 \lra O(2r + 1)$ with the property that $\lambda_2(Z_2) \subset SO(2r + 1)$ precisely when $r$ is even.
It follows that $\wt \xi_r$ is the direct sum $i \cdot\gamma^1 \oplus j \cdot \varepsilon^1$, where $i + j = 2r + 1$, and $i\equiv r \text{\rm mod }2$.
Here $\varepsilon^1$, $\gamma^1$ are the trivial and the universal line bundles respectively.
Hence the Thom space $T\wt\xi_r$ is $S^j(RP^\infty / RP^{i - 1})$.
It is easy to see that for any odd $p$ the reduced $\text{\rm mod }p$ cohomology $\overline H^*$ $(T\wt\xi_r; Z_p)$ vanishes if $r$ is odd, and the natural inclusion $S^{2r + 1} \subset T\wt\xi_r$ (as a ``fiber'') induces isomorphism of the cohomology groups with $Z_p$--coefficients for $r$ even.
Consequently by Serre's generalization of the Whitehead theorem \cite{S2} -- the inclusion $\Gamma S^{2r + 1} \subset \Gamma T\wt\xi_r$ (recall $\Gamma = \Omega^\infty S^\infty$) induces isomorphism  of the odd torsion parts of the homotopy groups for $r$ even, while for $r$ odd $\pi_*(\Gamma T \wt\xi_r)$ are finite $2$--primary groups.

\section{Computation of the cusp cobordism groups}
\label{sec:II3}

In the case of cusps $r = 2$ and we have that the inclusion $\Gamma S^5 \subset \Gamma T \wt\xi_2$ is a $\text{\rm mod }\mathcal C_2$ homotopy equivalence ($\mathcal C_2$ is the class of $2$--primary groups).

Let us consider the following pull-back diagram defining the space $X^{\fr} \Sigma^{1,1}$
\[
\begin{CD}
X^{\fr} \Sigma^{1,1} @>>> X\Sigma^{1,1} \\
@VV {\displaystyle{X\Sigma^{1,0}}} V @VV {\displaystyle{X\Sigma^{1,0}}}V\\
\Gamma S^5 @>>> \Gamma T\wt\xi_2
\end{CD}
\]
(Note that $X^{\fr} \Sigma^{1,1}$ is the classifying space of those cusp maps for which the normal bundle of the $\Sigma^{1,1}$--stratum in the target is trivialized.
Equivalently these are the cusp maps for which the kernel of the differential is trivialized over the cusp-stratum.)

The horizontal maps of the diagram induce isomorphisms of the odd-torsion parts of the homotopy groups.
Now we show that the homotopy exact sequence of the left-hand side fibration ``almost has a splitting''.

\begin{defn}
Let $p\co E\ \overset F{\lra}\ B$ be a fibration and let $t$ be a natural number.
We say that this fibration has an algebraic $t$--splitting if for each $i$ there is homomorphism $S_i\co \pi_i(B) \lra \pi_i(E)$ such that the composition of $S_i$ with the map $p_*$ induced by $p$ is a multiplication by~$t$.
We say that the fibration $p$ has a geometric $t$--splitting if it has an algebraic one such that all $S_i$ are induced by a map $s\co B \lra E$ (the same map $s$ for each $i$).
\end{defn}

\begin{lem}
\label{lem:4}
The fibration $X^{\fr} \Sigma^{1,1} \lra \Gamma S^5$ has a $6$--splitting.
\end{lem}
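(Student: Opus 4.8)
\medskip

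The plan is to construct a \emph{geometric} $6$--splitting, that is, a map $s\co\Gamma S^5\lra X^{\fr}\Sigma^{1,1}$ whose composition with the left--hand fibration $p\co X^{\fr}\Sigma^{1,1}\to\Gamma S^5$ is homotopic to multiplication by~$6$. Recall the geometric meaning of the two homotopy groups involved: $\pi_N(\Gamma S^5)\cong\pi^s(N-5)$ is the bordism group of closed $(N-5)$--manifolds immersed in $R^N$ with codimension~$5$ and trivialized normal bundle, while $\pi_N(X^{\fr}\Sigma^{1,1})$ is the bordism group of cusp maps of closed $(N-1)$--manifolds into $R^N$ equipped with a trivialization of the normal bundle of the cusp stratum in the target; under these identifications $p_*$ sends (the class of) a cusp map $f$ to (the class of) its cusp stratum $f(\Sigma^{1,1}(f))\looparrowright R^N$, carrying its trivialized normal bundle. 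Thus a geometric $t$--splitting is precisely a bordism--natural rule which, given a framed codimension~$5$ immersed submanifold $V\looparrowright R^N$, produces a cusp map whose cusp stratum is $t$ disjoint parallel copies of~$V$.

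\medskip

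To build such a rule I would work with the local model. Restricting the reduced cusp germ $g_0\co(R^4,0)\to(R^5,0)$ to a small bounding sphere $S^3\subset R^4$ yields the \emph{link of the cusp}: a cooriented codimension~one fold map $\ell_0\co S^3\to S^4=\partial D^5$, carrying in addition the product framing inherited from a tubular neighbourhood. Given a framed immersion $V^{n-4}\looparrowright R^{n+1}$ of codimension~$5$, identify an immersed tubular neighbourhood with $V\times D^5$ and insert $t$ disjoint copies of $g_0|_{D^4}$ into each fibre $D^5$. This produces a cusp map of the compact manifold $V\times(t\cdot D^4)$ into $V\times D^5\subset R^{n+1}$, with cusp stratum $t$ copies of~$V$ and with boundary the family of fold maps $V\times(t\cdot\ell_0)$. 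If $t\,[\ell_0]=0$ in the fold--bordism group in which the link lives, one may choose a fold map $W_0$ (with no cusps) bounding $t\cdot\ell_0$, form the family $V\times W_0$, and glue it in; the result is a closed cusp map $f_V\co M^n\to R^{n+1}$ whose cusp stratum is $t\cdot V$ and whose normal bundle along the cusp stratum is trivialized. Since the whole construction is local near the cusp stratum and performed fibrewise, it respects bordism and disjoint unions, and hence is induced by a single map $s\co\Gamma S^5\to X^{\fr}\Sigma^{1,1}$ with $p_*\circ s_*$ equal to multiplication by~$t$.

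\medskip

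It remains to see that $t=6$ makes $t\,[\ell_0]=0$. Viewing $\ell_0$ merely as a cooriented codimension~one fold map of a closed $3$--manifold, it defines a class in $\Cob\,\Sigma^{1,0}(3)$, which by Theorem~\ref{th:A} equals $Z_3$; so $3\,[\ell_0]$ vanishes once the normal framing is forgotten. A residual factor $2$ is needed in order to bound $t\cdot\ell_0$ \emph{compatibly with the framing} and fibrewise over~$V$: this $2$--torsion discrepancy is the coorientation/normal--framing datum carried by the link, equivalently the $Z_2$ that governs the fold key bundle~$(**)$. Hence $6\,[\ell_0]=0$, and $t=6$ works. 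In terms of the homotopy exact sequence of the left--hand fibration of the pull--back square this says exactly that the connecting homomorphism $\partial\co\pi_*(\Gamma S^5)\to\pi_{*-1}(X\Sigma^{1,0})$ is annihilated by~$6$, i.e.\ that the fibration has an algebraic $6$--splitting, and the construction above upgrades this to a geometric one.

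\medskip

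\textbf{Main obstacle.} The one substantive point is the last: to identify the link of the universal cusp in the appropriate framed, parametrized fold--bordism group and to show its order divides $6=2\cdot3$ --- the~$3$ coming transparently from $\Cob\,\Sigma^{1,0}(3)\cong Z_3$, the~$2$ from the coorientation/normal--framing bookkeeping, which is where the precise value $6$ is forced. Everything else --- the tubular--neighbourhood insertion of the local cusp model, the choice and gluing--in of the bounding fold map $W_0$, and the verification that the construction descends to a map of classifying spaces --- is routine, of the same flavour as the treatment of the key bundle in \cite{Sz3} and the figure~8 arguments in Claims~a) and~b) above.
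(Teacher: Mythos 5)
Your overall strategy---produce a splitting map by planting a local cusp model over each point of a framed immersed $(m-5)$--manifold $V\looparrowright R^m$, using the framing to identify an immersed tubular neighbourhood with $V\times D^5$---is exactly the idea in the paper's proof of its Claim~1. The difference, and it is a substantial one, is in how the two arguments supply the ``$t$ cusps in a $D^5$--fibre'' and how they close up the resulting manifold. You insert $t$ copies of the cusp germ $g_0|_{D^4}$ fibrewise, which leaves a boundary (the fold link $\ell_0\co S^3\to S^4$, with its framing), and you then must cap this off with a bounding fold map $W_0$ for $t\cdot\ell_0$. That reduces the whole lemma to showing that $t[\ell_0]=0$ in the appropriate framed fold--bordism group --- the point you flag as the ``main obstacle''. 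The paper sidesteps that obstacle entirely: it uses instead a \emph{closed} oriented $4$--manifold $M^4$ carrying a cusp map $M^4\to R^5$ with $t$ cusp points algebraically. Then $A\times M^4\to A\times D^5\to R^m$ is already a closed cusp map with cusp stratum $t\cdot[A]$, and no gluing of a bounding cobordism is needed. The existence of such an $(M^4,f)$ is obtained, in the paper's Claim~2, by citing the Eccles--Mitchell theorem (an oriented closed $4$--manifold immersed in $R^6$ with algebraically $2$ triple points) together with the Sz\H{u}cs/Lippner theorem that the $(r+1)$--tuple point set of an immersion and the $\Sigma^{1_r}$ set of its hyperplane projection are cobordant, hence for $r=2$ have the same algebraic count.

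The gap in your write-up is precisely where you said it is. The assertion that $6\,[\ell_0]=0$ is not established. First, $[\ell_0]$ does not live in $\Cob\,\Sigma^{1,0}(3)\cong Z_3$: the link carries a normal framing that must be extendable, fibrewise over $V$, to the bounding fold map, so the relevant group is a framed variant of the fold--bordism group (something of the type $\pi_4(X^{\mathrm{fr}}\Sigma^{1,0})$), and you have not identified that group, let alone computed the order of $[\ell_0]$ in it. Second, the ``residual factor $2$ from coorientation/normal--framing bookkeeping'' is an assertion, not an argument; nothing pins it to $2$ rather than $4$, $8$, or some other $2$--power, and the precise order is what the whole lemma hangs on. Replacing the closed manifold with a local germ plus cap is not just a cosmetic difference: it converts a concrete algebraic count (triple points, computable via Herbert's formula, realized by Eccles--Mitchell) into a torsion order computation in a framed bordism group that is harder to get one's hands on. If you want a self-contained fix within your framework, note that the order of $[\ell_0]$ in the framed group equals, by Pontrjagin--Thom duality, the minimal algebraic number of cusps of a closed cooriented cusp map $M^4\to R^5$, which brings you back exactly to the paper's Claim~2 and the Eccles--Mitchell input.
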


\begin{rem}
We shall prove this only in algebraic sense, since we will need only that.
For the existence of geometric splitting we give only a hint.
\end{rem}

\begin{proof}[Proof of Theorem~\ref{th:B}]
is immediate from Lemma~\ref{lem:4}.
Indeed,
\[
\Cob\, \Sigma^{1,1}(n) \approx \pi_{n + 1}(X \Sigma^{1,1})\underset{\mathcal C_2}{\approx} \pi_{n + 1} (X^{\fr} \Sigma^{1,1}).
\]
Now the homotopy exact sequence of the fibration
$p^{\fr} \co X^{\fr} \Sigma^{1,1}\ \overset{X\Sigma^{1,0}}{\hbox to12mm{\rightarrowfill}}\ \Gamma S^5$ has a $6$--splitting, hence modulo the class $\mathcal C_{\{2,3\}}$ we have
\[
\pi_{n + 1}(X^{\fr} \Sigma^{1,1}) \underset{\mathcal C_{\{2,3\}}}{\approx} \pi_{n + 1}^s(S^5) \oplus
\pi_{n + 1} (X\Sigma^{1,0}) \underset{\mathcal C_2}{\approx} \pi^s(n - 4) \oplus \pi^s(n).
\]
(In the last $\text{\rm mod }\mathcal C_2$ isomorphism we used Theorem~\ref{th:A}.)

Theorem~\ref{th:B} is proved except Lemma~\ref{lem:4}.
\end{proof}

\begin{proof}[Proof of Lemma~\ref{lem:4}] will follow from the following two claims.

\begin{claimnumbered}
\label{claim:1}
If there is a map of an oriented $4$--dimensional manifold into $R^5$ with $t$ cusp points (algebraically counting them), then the fibration $p^{\fr}$ has a $t$--splitting (algebraically).
\end{claimnumbered}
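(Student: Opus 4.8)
The plan is to manufacture, from a generic cusp map $g_0\co N^4\lra R^5$ of a closed oriented $4$--manifold with algebraic cusp number~$t$ (which the hypothesis provides, up to a harmless perturbation), for every $i$ a homomorphism $S_i\co\pi_i(\Gamma S^5)\lra\pi_i(X^{\fr}\Sigma^{1,1})$ whose composition with $p^{\fr}_*$ is multiplication by~$t$, the construction being ``multiplication of $g_0$ by framed manifolds''. Write $q_1,\dots,q_m$ for the cusp points of $g_0$, with signs $\varepsilon_j=\pm1$ and $\sum_j\varepsilon_j=t$; a generic map $M^4\lra R^5$ has only fold and cusp singularities, so $g_0$ is indeed a cusp map and its cusps are isolated.

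Next I would use the isomorphism $\pi_{n+k}(\Gamma T\xi)\approx\Imm^\xi(n)$ recalled in Section~\ref{sec:4}, in the case $\xi=5\varepsilon^1$ over a point, to identify $\pi_i(\Gamma S^5)=\pi_i^s(S^5)$ with the cobordism group of codimension~$5$ framed immersions in $R^i$, and represent a given class $\alpha\in\pi_i(\Gamma S^5)$ by such an immersion $P^{i-5}\looparrowright R^i$. (Such a representative exists because the stable normal bundle of $P$ is trivial and a stably trivial bundle of rank greater than the dimension of its base is trivial, so some immersion of $P$ in $R^i$ has trivialized normal bundle.) Let $P\times D^5\looparrowright R^i$ be the corresponding immersed tubular neighbourhood, the $D^5$--directions carrying the chosen framing. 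After rescaling so that $g_0(N^4)$ is contained in the interior of $D^5$, the product $\mathrm{id}_P\times g_0\co P\times N^4\lra P\times D^5$ is again a cusp map, since a Morin local form is a germ times an identity germ and such are preserved under multiplication by $\mathrm{id}_P$; post-composing with the tubular-neighbourhood immersion I obtain a cusp map
\[
h_\alpha\co Q^{i-1}:=P\times N^4\lra R^i .
\]
Here $Q$ is closed and oriented; the $\Sigma^{1,1}$--stratum of $h_\alpha$ is $\bigsqcup_j P\times\{q_j\}$, over which the kernel line of the differential is the constant line prescribed by the germ at $q_j$ and hence canonically trivialized, so $h_\alpha$ is an $\fr$--cusp map and $[h_\alpha]\in\pi_i(X^{\fr}\Sigma^{1,1})$. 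By construction $p^{\fr}_*[h_\alpha]$ is represented by this cusp stratum, which is $P$ times the framed $0$--manifold $\{(q_j,\varepsilon_j)\}$ and therefore equals $\big(\sum_j\varepsilon_j\big)\alpha=t\alpha$.

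I would then put $S_i(\alpha):=[h_\alpha]$ and check that this is a well defined homomorphism: running the same recipe over a framed immersed cobordism between two representatives of $\alpha$ produces a cobordism of $\fr$--cusp maps, and disjoint unions of representatives are sent to disjoint unions, so $S_i$ is additive. Hence $p^{\fr}$ has an algebraic $t$--splitting. (The recipe of multiplying by $g_0$ is natural enough to be induced by a single map $\Gamma S^5\lra X^{\fr}\Sigma^{1,1}$, which would yield the geometric splitting as well; Lemma~\ref{lem:4} needs only the algebraic one.) The step I expect to be the main obstacle is the identity $p^{\fr}_*[h_\alpha]=t\alpha$, or rather its careful justification: one must examine the local model to confirm that a cusp germ at $q_j$ multiplied by $\mathrm{id}_P$ contributes precisely $\varepsilon_j\alpha$, i.e.\ that the orientations are compatible and that the trivialization of the normal bundle of the cusp stratum coming from the $\fr$--structure matches the chosen framing of $P$ decorated with the signs $\varepsilon_j$. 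The remaining, more substantial, ingredient needed for Lemma~\ref{lem:4} -- a cusp map $N^4\lra R^5$ with algebraically $6$ cusps -- is a separate issue.
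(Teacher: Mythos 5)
Your proposal is correct and follows essentially the same route as the paper: representing a class in $\pi_i(\Gamma S^5)$ by a codimension~$5$ framed immersion, taking the product of a closed oriented $4$--manifold's cusp map with that immersion's source, and pushing forward through the framed tubular neighbourhood to obtain a cusp map whose cusp stratum represents $t$ times the original class. Your added remarks on well-definedness and the orientation/framing match-up are sensible elaborations of the same construction, not a different argument.
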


\begin{claimnumbered}
\label{claim:2}
There is a cusp-map $f\co M^4 \lra R^5$ with $t$--cusp points (counting algebraically).
\end{claimnumbered}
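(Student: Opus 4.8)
The plan is to reduce Claim~\ref{claim:2} to a single characteristic--number computation. First note that a generic (stable) smooth map $f\co M^4\lra R^5$ of a closed oriented $4$--manifold is automatically a cusp map: the Thom--Boardman class $\Sigma^{1,1,1}$ has codimension $6>4$ and hence is empty generically, so the only singularities are $\Sigma^{1,0}$ (along an embedded surface) and $\Sigma^{1,1}$. Since the universal normal bundle $\wt\xi_2$ has rank $2\cdot 2+1=5$ and the $\Sigma^{1,1}$--stratum is immersed into the $5$--dimensional target, that stratum $\Sigma^{1,1}(f)\subset M^4$ is $0$--dimensional -- a finite set of points; the orientation of $M$ and the coorientation of $f$ make each of them a signed point, and we write $\kappa(f)=\#\Sigma^{1,1}(f)\in Z$ for the algebraic count. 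A cusp cobordism carries a $1$--dimensional cusp stratum, whose algebraic boundary vanishes, and the count is additive under disjoint union, so $\kappa$ is a homomorphism $\Cob\,\Sigma^{1,1}(4)\lra Z$; moreover every closed oriented $M^4$ admits such a cooriented cusp map (the virtual normal bundle $f^*TR^5-TM$ is orientable), so the forgetful map $\Cob\,\Sigma^{1,1}(4)\lra\Omega_4^{SO}\cong Z$ is onto. It therefore suffices to prove that $6$ lies in the image of $\kappa$.

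Two inputs go into this. First, $\kappa$ is a characteristic number of the source: by the Thom polynomial of the cusp, $\kappa(f)=\langle\text{\rm Tp}(\Sigma^{1,1}),[M]\rangle$ for a universal class $\text{\rm Tp}(\Sigma^{1,1})$ in the stable characteristic classes of $f^*TR^5-TM$. As $f^*TR^5$ is trivial and a stable characteristic class reduces in degree $4$ to a rational multiple of $p_1$, this collapses to
\[
\kappa(f)=c\,\langle p_1(M),[M]\rangle=3c\,\sigma(M)
\]
for a universal constant $c$; hence the image of $\kappa$ is $3c\,Z$, realised on disjoint unions of copies of $CP^2$ and $\overline{CP^2}$. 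Second, $c\neq 0$: up to the rational identification $\pi_5(\Gamma T\wt\xi_2)\otimes Q\cong Q$, the homomorphism $\kappa$ equals the key--bundle projection $\pi_5(X\Sigma^{1,1})\lra\pi_5(\Gamma T\wt\xi_2)$ followed by the degree map, since this projection sends $[f]$ precisely to the class of the cusp stratum with its $\wt\xi_2$--normal data, a signed $0$--manifold. The fibre of the key bundle is $X\Sigma^{1,0}$, with finite homotopy groups $\pi_{m+1}(X\Sigma^{1,0})=\Cob\,\Sigma^{1,0}(m)$ by Theorem~\ref{th:A}, hence rationally trivial; and $\Gamma S^5\subset\Gamma T\wt\xi_2$ is a rational equivalence by Section~\ref{sec:II2}. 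So the projection is a rational equivalence, $\kappa\otimes Q$ is onto $Q$, and $c\neq 0$.

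It remains to compute $c$ -- equivalently $\kappa(CP^2)=3c$ -- and to check that $\kappa(CP^2)\mid 6$; granting that, $6$ lies in the image of $\kappa$, which is Claim~\ref{claim:2}, and, fed into Claim~\ref{claim:1}, it yields the $6$--splitting of Lemma~\ref{lem:4}. I would carry this out either by quoting the cusp Thom polynomial for cooriented codimension--one maps from \cite{RSz},\cite{R} and evaluating it on $CP^2$, or directly: construct an explicit generic map $g\co CP^2\lra R^5$ -- for instance a generic linear projection of an embedding $CP^2\hookrightarrow R^N$ -- and count its cusps with signs. The expected value is $\kappa(CP^2)=\pm 6$ (so $c=\mp 2$); should the normalisation come out a proper divisor of $6$, one still reaches the count $6$ by the disjoint--union remark, and in any case a $\kappa(CP^2)$--splitting already implies the $6$--splitting since $\kappa(CP^2)\mid 6$. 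The one genuinely delicate step -- the main obstacle -- is exactly this last computation: pinning down the cusp Thom polynomial in the cooriented codimension--one setting and evaluating it \emph{with correct signs}, i.e.\ producing a concrete enough generic map of $CP^2$ and tracking the local sign of each of its cusps. Everything else is routine transversality and characteristic--class bookkeeping.
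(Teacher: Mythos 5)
Your proposal takes a genuinely different route from the paper, but it is not a complete proof: the crucial numerical input is left as a conjecture.

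The paper proves Claim~\ref{claim:2} by citation, with no characteristic--class computation at all. It invokes the projection theorem of \cite{Sz1}, \cite{Sz2}, \cite{L} -- for a generic immersion $g\co M\looparrowright Q\times R^1$, the cusp set $\Sigma^{1,1}$ of the composition with the projection to $Q$ is (oriented) cobordant to the triple--point manifold $\Delta_3(g)$ -- and then the Eccles--Mitchell theorem \cite{EM}, which produces an oriented closed $4$--manifold immersed in $R^6$ with algebraically $2$ triple points. Projecting that immersion to $R^5$ gives a cusp map with a concrete, small cusp count in one step, and feeds directly into Claim~\ref{claim:1}.

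Your approach instead identifies the algebraic cusp count $\kappa$ with a Thom--polynomial characteristic number, reduces it to $\kappa(f)=3c\,\sigma(M)$ for a universal constant $c$, shows $c\neq 0$ by a correct rational--homotopy argument using the key bundle and the finiteness of $\Cob\,\Sigma^{1,0}(\cdot)$, and then stops. This is exactly where the proof has to do its work. Knowing $c\neq 0$ only tells you that \emph{some} nonzero $t$ is realized as a cusp count; for Lemma~\ref{lem:4} and Theorem~\ref{th:B} one needs $t$ to have no prime factors other than $2$ and $3$ (so that a $t$--splitting gives a $\bmod\,\mathcal C_{\{2,3\}}$ splitting). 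If the normalization came out, say, $3c=15$, the whole scheme would fail, and your text explicitly defers the computation of $c$ (``the main obstacle''), offering only the expectation $\kappa(CP^2)=\pm 6$. That expectation is not argued, and it is precisely the content of the claim. You would need to either evaluate the cooriented codimension--one cusp Thom polynomial with a verified sign convention (nontrivial, and not reducible to a routine remark), or -- much more economically and in the spirit of the paper -- replace the characteristic--class detour by the projection/triple--point correspondence, which hands you the required number directly from \cite{EM}.

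A secondary caveat: your rational argument uses the identification of $\kappa$ with the composition $\pi_5(X\Sigma^{1,1})\to\pi_5(\Gamma T\wt\xi_2)\to Z$. This is plausible but is itself asserted rather than checked (one has to verify that the $\wt\xi_2$--normal structure on the cusp stratum gives exactly the sign with which $\kappa$ counts). The paper avoids this bookkeeping entirely.
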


\begin{proof}[Proof of Claim~\ref{claim:1}]
Let $f\co  M^4 \lra R^5$ be a cusp-map with $t$ cusp-points.
Let $x$ be an element in $\pi_m(\Gamma S^5) \approx \pi^s(m - 5)$.
It can be represented by a framed, immersed $(m - 5)$--dimensional manifold $A^{m - 5}$ in $R^m$, let us denote its immersion by $\alpha$.

Take the product $A^{m - 5} \times M^4$ and its map into the direct product $A^{m - 5} \times D^5$ by $\text{\rm id}_A \times f$.
Now the target $A^{m - 5} \times D^5$ can be mapped by a submersion $F$ into $R^m$ onto the immersed tubular neighbourhood of $\alpha(A)$ using the framing to map the $D^5$--fibers).
The composition $A \times M^4 \lra R^m$ is clearly a cusp map and its cusp-singularity stratum represents the element $t \cdot x$ in $\pi^s(m - 5)$.

Claim~\ref{claim:1} is proved (at least its algebraic version.
The geometric one follows from the fact that we use  the same element $[f\co M^4 \lra D^5]$ for any element $x \in \pi^s(i)$ and for any dimension $i$ to construct the element $S_i(x)$.
The classifying space $\Gamma S^5$ can be obtained as the limit of target spaces of codimension 5 framed immersions.)
\end{proof}

\begin{proof}[Proof of Claim~\ref{claim:2}]
is a compilation of the following two theorems.

\begin{theor}[\cite{Sz1}, \cite{Sz2}, \cite{L}]
Given a generic immersion $g\co M \looparrowright Q \times R^1$ and a natural number $r$, let us denote by $\Delta_{r + 1}(g)$ the manifold of (at least) $(r + 1)$--tuple points in $M^n$.
Let $f\co M \lra Q$ be the composition of $g$ with the projection $Q \times R^1 \lra Q$.
Let us denote by $\Sigma^{1_r}(f)$ the closure of the set of $\Sigma^{1_r}$ singular points of~$f$.
Then the manifolds $\Delta_{r + 1}(g)$ and $\Sigma^{1_r}(f)$ are cobordant.
If $M$ and $Q$ are oriented and $\text{\rm dim }Q - \text{\rm dim }M$ is odd, then these manifolds are oriented and they are oriented-cobordant.
\end{theor}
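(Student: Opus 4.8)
\emph{Strategy.}
The statement is a local--to--global phenomenon, and the cleanest route is to produce an explicit cobordism, living inside $M$ (equivalently inside $Q\times R^1$), between the two submanifolds. For the homological shadow of the statement --- equality of the classes $[\Sigma^{1_r}(f)]$ and $[\Delta_{r+1}(g)]$ in $H_*(M)$, which is in fact all that is needed to read off the \emph{algebraic} number of $\Sigma^{1_r}$--points --- one can argue more cheaply: the Thom polynomial of $\Sigma^{1_r}$ for a map of ``codimension'' $\dim Q-\dim M$ and the multiple--point formula for the immersion $g$ are both universal polynomials in the characteristic classes of the respective virtual normal bundles, and $\nu_g=\nu_f\oplus\varepsilon^1$ (adjoining the trivial ``height'' direction), so the two polynomials are evaluated on the same classes; checking that the polynomials agree then gives $[\Sigma^{1_r}(f)]=[\Delta_{r+1}(g)]$. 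The full cobordism statement, however, requires the geometric construction below.

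\emph{The key geometric picture and the base case $r=1$.}
Let $e$ be the unit vector in the $R^1$--direction, so that $f=\pi\circ g$ is the vertical projection of the immersed manifold $g(M)$, so that $\Sigma^1(f)\subset M$ is exactly the locus where $T_{g(x)}g(M)\ni e$, and $\Delta_2(g)$ is the double--point locus. For a generic small $\varepsilon>0$ put $Z_\varepsilon:=g^{-1}\bigl(g(M)\cap(g(M)+\varepsilon e)\bigr)\subset M$; this is a closed submanifold of codimension $\dim Q-\dim M+1$ in $M$, the same codimension as $\Sigma^1(f)$ and as $\Delta_2(g)$. Scaling $\varepsilon$ upward until $g(M)+\varepsilon e$ has left the compact set $g(M)$ exhibits $Z_\varepsilon$ as null--cobordant (for generic $\varepsilon$ the resulting total space is a manifold with boundary, the $\varepsilon$--direction restoring transversality across the tangency values, except over fold points, which is a further, avoidable, condition). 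Letting $\varepsilon\to0^{+}$ instead, the intersection $g(M)\cap(g(M)+\varepsilon e)$ breaks into a part accumulating on the genuine double points $\Delta_2(g)$ (transverse sheets stay transverse under a small shift) and a part accumulating on $\Sigma^1(f)$ (near a fold point the two nearby $f$--preimages differ in height, to leading order, like $\sqrt{\text{distance}}$, producing for each small $\varepsilon$ one nearby intersection point); a local analysis identifies the limit and shows $Z_\varepsilon\sim\Delta_2(g)\sqcup\Sigma^1(f)$. Comparing the two conclusions gives $\Delta_2(g)\sqcup\Sigma^1(f)\sim 0$, i.e.\ $\Delta_2(g)\sim\Sigma^1(f)$.

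\emph{The inductive step.}
For $r\ge 2$ one uses that the two sides satisfy compatible recursions: $\Sigma^{1_r}(f)$ is the fold locus $\Sigma^1(f_{r-1})$ of the restricted map $f_{r-1}:=f|_{\Sigma^{1_{r-1}}(f)}$ (Thom--Boardman), while $\Delta_{r+1}(g)$ is --- up to a combinatorial covering factor, removed by passing to unordered configurations --- the double--point locus of the immersion carried by the $r$--fold manifold $\wt\Delta_r(g)$. The crucial point, and the genuine content of the induction, is that the derived map $f_{r-1}$ is again, up to cobordism, the vertical projection of the derived immersion on $\wt\Delta_r(g)$, so that the base case applies to this derived prim pair; granting this, induction on $r$ yields $\Sigma^{1_r}(f)\sim\Delta_{r+1}(g)$. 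In the oriented case ($\dim Q-\dim M$ odd, equivalently $g$ of even codimension) the parity is precisely what makes the multiple--point manifolds orientable and makes the relevant covering/swap involutions orientation--preserving; one then checks that all the cobordisms above can be taken orientation--preserving, upgrading the conclusion to an oriented cobordism.

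\emph{Main obstacle.}
The hard part is the degeneration analysis in the base case: showing that as $\varepsilon\to0$ the family $Z_\varepsilon$ really limits to a manifold--with--boundary whose two boundary pieces are exactly $\Delta_2(g)$ and $\Sigma^1(f)$ --- uniformly near the higher Thom--Boardman strata of $f$ and the higher multiple--point strata of $g$, and with the correct multiplicities --- which is where Morin's normal forms and their ``prim'' refinement do the work. In the inductive step the analogous obstacle is verifying that the derived objects again form a prim pair and correctly accounting for the combinatorial multiplicities (and, in the oriented case, the signs) introduced by the $r$--fold configuration spaces; these are presumably the points worked out across \cite{Sz1}, \cite{Sz2} and \cite{L}.
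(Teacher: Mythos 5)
The paper does not prove this theorem: inside the proof of Claim~2 it is quoted verbatim, attributed to \cite{Sz1}, \cite{Sz2}, \cite{L}, and then used as a black box. There is therefore no ``paper's own proof'' to compare your proposal against.

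As to the proposal itself, taken on its own terms: the base-case picture is sound. The identification $\Sigma^1(f)=\{x: e\in T_{g(x)}g(M)\}$ is correct, the one-sided $\varepsilon$-shift family $Z_\varepsilon$ does degenerate onto $\Delta_2(g)\cup\Sigma^1(f)$ (one nearby intersection per fold point, one per transverse double point), and large $\varepsilon$ kills $Z_\varepsilon$, so the sweep gives a cobordism in $M$. The codimension-parity criterion for orientability (even normal codimension of $g$, i.e.\ $\dim Q-\dim M$ odd) is also the right one. But the write-up leaves two genuine gaps, both of which you flag yourself: (i) the boundary analysis of the family $Z_\varepsilon$ near the locus where the two limit strata interact (points of $\Delta_2(g)$ at which one sheet is tangent to $e$, i.e.\ the higher prim strata), where one has to show the limit really is a manifold-with-boundary with the two claimed boundary pieces and no contribution from deeper strata; and (ii) the inductive step, where ``the derived map $f_{r-1}$ on $\Sigma^{1_{r-1}}(f)$ is, up to cobordism, again the vertical projection of the derived immersion on $\widetilde\Delta_r(g)$'' is asserted but not established --- and since the identification of $\Sigma^{1_{r-1}}(f)$ with $\Delta_r(g)$ is itself only up to cobordism, one must argue that the cobordism can be taken to carry the prim structure needed to run the base case again. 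These are exactly the hard points that \cite{Sz1}, \cite{Sz2}, \cite{L} exist to settle; without them the proposal is a correct heuristic outline rather than a proof.
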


\begin{theor}[Eccles--Mitchell \cite{EM}]
There is an oriented closed $4$--dimensional manifold $M^4$ and an immersion $g\co M^4 \looparrowright R^6$ with (algebraically) $2$ triple points.
\end{theor}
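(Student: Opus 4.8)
The plan is to reduce this purely geometric statement to a characteristic-number computation. The algebraic number of triple points of a self-transverse immersion is a bordism invariant: a generic immersed cobordism between two such immersions carries a compact $1$--manifold of triple points whose boundary is the difference of the two triple-point $0$--manifolds. It is also additive under disjoint union, so it defines a homomorphism $t\co \Imm^{SO}(4,2)\lra Z$, where $\Imm^{SO}(4,2)$ is the cobordism group of immersions of closed oriented $4$--manifolds in $R^6$ with oriented $2$--dimensional normal bundle. It therefore suffices to evaluate $t$ on a single, explicitly chosen manifold and obtain the value $2$.

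The key step is to compute $t$ by a characteristic-class formula. I would invoke the theorem quoted above relating the $(r+1)$--fold point manifold $\Delta_{r+1}(g)$ of an immersion $g\co M\looparrowright Q\times R^1$ to the closure $\Sigma^{1_r}(f)$ of the singular stratum of the projected map $f=\pi\circ g\co M\lra Q$: taking $Q=R^5$, $r=2$, and noting that $\dim Q-\dim M=1$ is odd, the two oriented $0$--manifolds $\Delta_3(g)$ and $\Sigma^{1,1}(f)$ are oriented-cobordant, so $t(g)$ equals the algebraic number of cusp points of the cusp map $f\co M^4\lra R^5$. This number is the evaluation on $[M]$ of the Thom polynomial of the $\Sigma^{1,1}$ singularity for codimension-one maps; since the target $R^5$ is parallelizable, that class is, modulo torsion, a universal rational multiple of $p_1(M)$, whence $t(g)=c\cdot\langle p_1(M),[M]\rangle=3c\cdot\sigma(M)$ for a universal constant $c$, by the Hirzebruch signature theorem. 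The constant is nonzero --- it should come out to be $c=\pm\frac{1}{3}$, so that $t(g)=\pm\sigma(M)$ --- and I would pin it down either from the known expression for the $\Sigma^{1,1}$ Thom polynomial or by computing the cusp number of one explicit map. (One can alternatively bypass the singularity theory and argue directly with Herbert's multiple-point formula, expressing $[\Delta_3(g)]$ through the normal Euler class $e(\nu_g)$ and using that an immersion into $R^6$ forces $e(\nu_g)^2=-p_1(M)$ by Hirsch's criterion; this gives the same formula.)

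Finally, exhibit a manifold realizing the value $2$. Observe first that the signature of any oriented $4$--manifold admitting an immersion in $R^6$ is even: for $g\co M^4\looparrowright R^6$ the bundle $\nu_g$ is an oriented $2$--plane bundle with $e(\nu_g)^2=p_1(\nu_g)=-p_1(M)$ (stably $\nu_g=-TM$), while $e(\nu_g)\equiv w_2(M)$ is a characteristic vector of the intersection form of $M$, so $\langle e(\nu_g)^2,[M]\rangle\equiv\sigma(M)\pmod 8$; comparing with $\langle e(\nu_g)^2,[M]\rangle=-3\sigma(M)$ gives $4\sigma(M)\equiv 0\pmod 8$. Hence $|t|=|\sigma|$ cannot be smaller than $2$, and this value is attained: take $M^4=\overline{CP^2}\,\#\,\overline{CP^2}\,\#\,(S^2\times S^2)$, which is closed oriented with $\sigma=-2$. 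It immerses in $R^6$ by Hirsch's immersion theorem, which asks for an oriented $2$--plane bundle $\nu$ with $TM\oplus\nu$ stably trivial, i.e.\ with $w_2(\nu)=w_2(M)$ and $\langle e(\nu)^2,[M]\rangle=-\langle p_1(M),[M]\rangle=6$; writing the intersection form as $\langle-1\rangle\oplus\langle-1\rangle\oplus H$ (with $H$ the hyperbolic plane), the class $e(\nu)=(1,1,2,2)$ has the required mod-$2$ reduction and square $6$, and over a $4$--complex matching $w_1$, $w_2$, $p_1$ suffices to trivialize the rank-$6$ bundle $TM\oplus\nu$. By the formula of the previous paragraph a generic immersion of $M$ in $R^6$ then has exactly $2$ algebraic triple points. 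The main obstacle is the middle step --- establishing $c\neq 0$ and determining its value (equivalently, that the algebraic triple-point number is $\pm\sigma$ and not $0$ or a larger multiple of the signature); the bordism invariance and the check that $M$ immerses in $R^6$ are routine. A construction-theoretic alternative --- building the immersion by hand from the local model of three coordinate $4$--planes in $R^6$ meeting at a point, and then capping off while controlling the extra triple points created --- is available but considerably more delicate, precisely because one must keep the signed count equal to exactly $2$.
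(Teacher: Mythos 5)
The paper does not prove this statement; it quotes it as a known theorem of Eccles and Mitchell \cite{EM}, whose original argument is stable-homotopy-theoretic (computing the image of a Hurewicz-type map in $\pi^s_6$ of a Thom space, as the paper's Lemma~1 in Part~\ref{part:III} in fact generalizes). Your route through characteristic numbers and the signature is a legitimate, and in spirit more geometric, alternative, and your final numerics (the parity obstruction via the Wu/van der Blij congruence, and the realizing manifold $\overline{CP^2}\#\overline{CP^2}\#(S^2\times S^2)$ with $\sigma=-2$, $e(\nu)=(1,1,2,2)$) are correct.

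However, there is a genuine, unresolved factor-of-three ambiguity sitting exactly where you flag the ``main obstacle.'' The projection theorem you invoke identifies the \emph{source} triple-point $0$--manifold $\Delta_3(g)\subset M$ (three preimages per target triple point) with the cusp set $\Sigma^{1,1}(f)\subset M$; by Herbert's formula this set has signed cardinality $\langle e(\nu_g)^2,[M]\rangle=\langle -p_1(M),[M]\rangle=-3\sigma(M)$, so the cusp count is $-3\sigma(M)$, i.e.\ your universal constant is $c=-1$ and not $\pm\tfrac13$. The quantity the theorem is actually about --- the algebraic number of triple points of the immersion, counted in the \emph{target} --- is one third of that, $-\sigma(M)$. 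Your proposal slides between these two conventions: you define $t(g)$ via $\Delta_3(g)$ (source) but then assert $t(g)=\pm\sigma(M)$ (which is the target count). The conclusion is right, but the intermediate identification ``$t(g)=$ cusp count'' together with ``$t(g)=\pm\sigma(M)$'' is internally inconsistent until the $3{:}1$ covering from source to target triple points is made explicit. This is not a fatal error --- once one keeps the two counts apart, either Herbert's formula directly, or the projection theorem plus the Thom polynomial of $\Sigma^{1,1}$, yields $n_3(g)=-\sigma(M)$ for the target count, and the rest of your argument goes through --- but as written the middle step does not cohere, and that is precisely the step on which the whole reduction rests. A minor secondary point: the claim that a rank-$6$ oriented bundle over a closed $4$--manifold with $w_2=0$ and $p_1=0$ is trivial (needed to get the immersion from Hirsch's theorem) is true but deserves a one-line justification, e.g.\ that oriented bundles of rank $\geq 5$ over a $4$--complex are classified by $(w_2,p_1)$.
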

\end{proof}
Theorem~\ref{th:B} is proved.
\end{proof}

\goodbreak

\part{Higher Morin maps}
\label{part:III}

Most of the previous arguments can be applied in the computation of cobordism groups of Morin maps having at most $\Sigma^{1_r}$ singular points for any $r$ (the codimensions of the considered maps are still equal to one, and the maps are cooriented).
The only problem is that we need a generalization of the Theorem of Eccles--Mitchell.

Below we shall give a weak form of such a generalization.
This will allow us to compute the groups $\Cob\, \Sigma^{1_r}(n)$ modulo the $p$--primary part for $p \leq r + 1$.

Notation: Let $\mathcal C\{ p \leq 2r + 1\}$ denote the minimal class of groups containing all $p$--primary groups for any prime $p \leq 2 r + 1$.
The main result of this Part~\ref{part:III} is the following.

\renewcommand{\thethm}{\Alph{thm}}
\setcounter{thm}{2}

\begin{thm}
\label{th:C}
Let us denote by $\Cob \Sigma^{1_i}(n)$ the cobordism group of $\Sigma^{1_i}$--map of oriented $n$--manifolds in $R^{n + 1}$ (i.e.\ $\tau$--maps for $\tau = \{\Sigma^0, \Sigma^{1,0},\dots, \Sigma^{1,1,\dots, 1}, \ i \text{ digits }1\}$).
Then for any $r$
\[
\Cob\, \Sigma^{1_{2r + 1}}(n) \underset{\mathcal C_2}{\approx} \Cob\, \Sigma^{1_{2r}}(n) \underset{\mathcal C\{p \leq 2r + 1\}}{\approx} \bigoplus\limits_{i = 0}^r \pi^s(n - 4i).
\]
\end{thm}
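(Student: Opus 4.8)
The plan is to derive both isomorphisms from the key bundle of Section~\ref{sec:4}, in the spirit of Parts~\ref{part:I} and~\ref{part:II}. For the first isomorphism take $\eta=\Sigma^{1_{2r+1}}$, so $\tau'=\{\Sigma^0,\Sigma^{1,0},\dots,\Sigma^{1_{2r}}\}$ and the key bundle reads
\[
X\Sigma^{1_{2r}}\ \lra\ X\Sigma^{1_{2r+1}}\ \lra\ \Gamma T\wt\xi_{2r+1}.
\]
Since $2r+1$ is odd, the cohomology computation recalled in Section~\ref{sec:II2} gives that $\pi_*(\Gamma T\wt\xi_{2r+1})$ consists of finite $2$--primary groups; hence in the homotopy exact sequence of this fibration both base terms lie in $\mathcal C_2$, so the fibre inclusion $X\Sigma^{1_{2r}}\lra X\Sigma^{1_{2r+1}}$ is an isomorphism on homotopy modulo $\mathcal C_2$. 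As $\Cob\,\Sigma^{1_j}(n)\approx\pi_{n+1}(X\Sigma^{1_j})$, this yields $\Cob\,\Sigma^{1_{2r+1}}(n)\underset{\mathcal C_2}{\approx}\Cob\,\Sigma^{1_{2r}}(n)$.

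The second isomorphism I would prove by induction on $r$. For $r=0$ the group $\Cob\,\Sigma^{1_0}(n)$ is the cobordism group of cooriented codimension one immersions of oriented $n$--manifolds, which is $\pi^s(n)=\bigoplus_{i=0}^{0}\pi^s(n-4i)$ (and $\mathcal C\{p\le1\}$ is trivial); the case $r=1$ is Theorem~\ref{th:B}. For the step $r-1\to r$ apply the key bundle with $\eta=\Sigma^{1_{2r}}$, giving
\[
X\Sigma^{1_{2r-1}}\ \lra\ X\Sigma^{1_{2r}}\ \lra\ \Gamma T\wt\xi_{2r}.
\]
Now $2r$ is even, so (Section~\ref{sec:II2}) the fibre inclusion $\Gamma S^{4r+1}\subset\Gamma T\wt\xi_{2r}$ is an isomorphism on the odd--torsion parts of homotopy, and exactly as in Part~\ref{part:II} the pull--back square along $X\Sigma^{1_{2r}}\lra\Gamma T\wt\xi_{2r}$ defines a space $X^{\fr}\Sigma^{1_{2r}}$ with $\pi_*(X^{\fr}\Sigma^{1_{2r}})\underset{\mathcal C_2}{\approx}\pi_*(X\Sigma^{1_{2r}})$ and a fibration $p^{\fr}\co X^{\fr}\Sigma^{1_{2r}}\to\Gamma S^{4r+1}$ whose fibre is again $X\Sigma^{1_{2r-1}}$. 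By the inductive hypothesis together with the first isomorphism (for $r-1$), $\pi_{n+1}(X\Sigma^{1_{2r-1}})\underset{\mathcal C\{p\le2r+1\}}{\approx}\bigoplus_{i=0}^{r-1}\pi^s(n-4i)$. If $p^{\fr}$ has a $t$--splitting with $t$ a unit modulo $\mathcal C\{p\le2r+1\}$ — i.e.\ every prime factor of $t$ is $\le2r+1$ — then in the Serre quotient by that class multiplication by $t$ is invertible, so $S_i$ becomes a section of $p^{\fr}_*$, the homotopy exact sequence splits there, and using $\pi_{n+1}(\Gamma S^{4r+1})=\pi^s(n-4r)$ one gets $\pi_{n+1}(X^{\fr}\Sigma^{1_{2r}})\underset{\mathcal C\{p\le2r+1\}}{\approx}\pi^s(n-4r)\oplus\bigoplus_{i=0}^{r-1}\pi^s(n-4i)$, hence
\[
\Cob\,\Sigma^{1_{2r}}(n)\ \underset{\mathcal C\{p\le2r+1\}}{\approx}\ \bigoplus_{i=0}^{r}\pi^s(n-4i),
\]
completing the induction.

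It remains to produce the $t$--splitting. As in Claim~\ref{claim:1}, it suffices to exhibit a $\Sigma^{1_{2r}}$--map $f\co M^{4r}\to R^{4r+1}$ of a closed oriented manifold, with the normal bundle of its $\Sigma^{1_{2r}}$--stratum in the target trivialised, for which the algebraic number of points of the set $\Sigma^{1_{2r}}(f)$ (which is zero--dimensional, since that stratum has codimension $4r$ in $M^{4r}$) equals $t$; then, taking products of $f$ with framed immersions representing classes in $\pi^s(m-4r-1)\approx\pi_m(\Gamma S^{4r+1})$ and composing with submersions onto immersed tubular neighbourhoods — verbatim as in the proof of Claim~\ref{claim:1} — yields maps $S_i$ with $p^{\fr}_*S_i=t\cdot\mathrm{id}$. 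By the theorem on multiple points used in the proof of Claim~\ref{claim:2} (if $g\co M^{4r}\looparrowright R^{4r+2}=R^{4r+1}\times R^1$ is a generic immersion and $f$ its composite with the projection to $R^{4r+1}$, then $\Sigma^{1_{2r}}(f)$ is oriented--cobordant to the $(2r+1)$--tuple point manifold $\Delta_{2r+1}(g)$, the orientations existing since the codimension $1$ is odd), this reduces to a \emph{weak generalisation of the Eccles--Mitchell theorem}: there is a closed oriented manifold $M^{4r}$ and an immersion $g\co M^{4r}\looparrowright R^{4r+2}$ whose algebraic number of $(2r+1)$--tuple points is a nonzero integer all of whose prime factors are $\le2r+1$. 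For $r=1$ this is Eccles--Mitchell \cite{EM}, with the value $2$.

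This last statement is the only genuinely new ingredient, and I expect it to be the main obstacle. The natural approach is to regard ``algebraic number of $(2r+1)$--tuple points'' as a homomorphism from the cobordism group of codimension two immersions of closed oriented $4r$--manifolds (that is, from $\pi^s_{4r+2}(MSO(2))$ with $MSO(2)=\mathbb{CP}^\infty$) to $\pi^s(0)=Z$, induced on classifying spaces by the natural $(2r+1)$--fold self--intersection map into (a desuspension of) the extended power $D_{2r+1}(MSO(2))=(E\Sigma_{2r+1})_+\wedge_{\Sigma_{2r+1}}(MSO(2))^{\wedge(2r+1)}$, which carries the action of $\Sigma_{2r+1}$ permuting the $2r+1$ local sheets of a multiple point. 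One then wants a transfer argument along a chain $\Sigma_{2r+1}\supset\Sigma_{2r}\supset\dots\supset\Sigma_1$, each inclusion contributing its index as a multiplicative factor, to produce a bordism class on which the algebraic count equals $(2r+1)!$ up to sign — whose prime factors are all $\le2r+1$; in any event one needs such a class on which the count is nonzero, and this is exactly where the content of Eccles--Mitchell and its generalisation lies. (As in Part~\ref{part:II} the algebraic $t$--splitting could then be upgraded to a geometric one, but Theorem~\ref{th:C} needs only the algebraic version.) Granting the weak generalised Eccles--Mitchell statement, Theorem~\ref{th:C} follows from the two inductions above.
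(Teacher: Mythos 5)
Your overall framework matches the paper's: both isomorphisms come from the key bundle, the odd-indexed one by the mod $\mathcal C_2$ Whitehead theorem applied to $\Gamma T\wt\xi_{2r+1}$, the even-indexed one by passing to the framed classifying space $X^{\fr}\Sigma^{1_{2r}}\to\Gamma S^{4r+1}$, producing an algebraic $t$--splitting exactly as in Part~\ref{part:II}, and feeding that into an induction on $r$. You also correctly identify the reduction of the $t$--splitting, via Claim~\ref{claim:1} and the multiple--point theorem of \cite{Sz1,Sz2,L}, to the existence of a closed oriented $M^{4r}$ with an immersion into $R^{4r+2}$ whose algebraic number of $(2r+1)$--tuple points is nonzero with all prime factors $\le 2r+1$.

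However, you then stop: you flag that statement as the genuinely new ingredient and only sketch a speculative strategy (extended powers $D_{2r+1}(MSO(2))$, transfers along $\Sigma_{2r+1}\supset\dots\supset\Sigma_1$, hoping to realise $(2r+1)!$). That is a real gap --- the entire content of Part~\ref{part:III} beyond the formalism is precisely the proof of this statement, which is Lemma~\ref{lem:1} of the paper, and your sketch is neither carried out nor the route the paper takes. The paper's argument is quite different and more elementary: by Herbert's theorem the algebraic $(2k+1)$--tuple point count of an immersion $f\co M^{4k}\looparrowright R^{4k+2}$ equals $\langle\overline p_1^{\,k},[M]\rangle$; the cobordism group of such immersions is $\pi^s_{4k+2}(MSO(2))=\pi^s_{4k+2}(CP^\infty)$, and on this group the count factors as the stable Hurewicz homomorphism followed by the Thom isomorphism and evaluation on $p_1^{\,k}$, the last two of which are isomorphisms. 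Hence the count is onto a subgroup $t(k)Z$ with $t(k)$ equal to the order of the cokernel of the stable Hurewicz map $\pi^s_{4k+2}(CP^\infty)\to H_{4k+2}(CP^\infty)$; this is nonzero because the rational stable Hurewicz map is an isomorphism, and its prime divisors are $\le 2k+1$ by combining Arlettaz's exponent bound on $\operatorname{coker} h_m$ for a $(b-1)$--connected space with Serre's theorem that $\pi^s(j)$ has no $p$--torsion for $p>(j+3)/2$. You should supply this argument (or a complete version of your transfer approach) to close the gap; without it your proof of Theorem~\ref{th:C} is conditional.
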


The proof is very similar to that given in Parts~\ref{part:I} and \ref{part:II}.
It goes by induction on~$r$.
First we give a weak analogue of the Theorem of Eccles and Mitchell.

\renewcommand{\thethm}{\arabic{thm}}
\setcounter{thm}{0}
\begin{lem}
\label{lem:1}
\emph{\rm a)}
For any natural number $k$ there is a positive integer $t(k)$ such that for any immersion of an oriented, closed, smooth $4k$--dimensional manifold in $R^{4k + 2}$ the algebraic number of $(2k + 1)$--tuple points is divisible by $t(k)$, and there is a case when this number is precisely $t(k)$.

{\rm b)} The number $t(k)$ coincides with the order of the cokernel of the stable Hurewicz homomorphism
\[
\pi_{4k + 2}^s (CP^\infty) \lra H_{4k + 2} (CP^\infty).
\]
\end{lem}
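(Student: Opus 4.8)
The plan is to reduce the statement about $(2k+1)$-tuple points of immersions to a homotopy-theoretic statement via the classical correspondence between cobordism of immersions and stable homotopy, together with the description of multiple-point manifolds recorded in the Theorem of \cite{Sz1}, \cite{Sz2}, \cite{L} quoted above. First I would recall that the cobordism group of immersions of oriented $4k$-dimensional manifolds in $R^{4k+2}$ (codimension $2$) is isomorphic to $\pi^s_{4k+2}(T\gamma^2_{\mathrm{or}})$, where $\gamma^2_{\mathrm{or}}$ is the universal oriented $2$-plane bundle over $BSO(2)=CP^\infty$; and since $\gamma^2_{\mathrm{or}}$ is the realification of the tautological complex line bundle, its Thom space is the Thom space of $CP^\infty$, whose cohomology/homology is that of $CP^\infty$ shifted by $2$. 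So $\Imm^{\gamma^2_{\mathrm{or}}}(4k) \approx \pi^s_{4k+2}(T\gamma^2_{\mathrm{or}}) \approx \widetilde\pi^s_{4k+2}(MSO(2))$, and under the Thom isomorphism $H_{4k+2}(T\gamma^2_{\mathrm{or}}) \approx H_{4k}(CP^\infty)$.

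The second step is to identify the ``algebraic number of $(2k+1)$-tuple points'' with a Hurewicz-type invariant. By the multiple-point theorem, for a self-transverse immersion $g$ of $M^{4k}$ in $R^{4k+2}$ the manifold $\Delta_{2k+1}(g)$ of $(2k+1)$-tuple points is (canonically, after the oriented/odd-codimension hypothesis) a closed oriented manifold of dimension $4k+2 - 2k\cdot 2 \cdot ? $ — more precisely of dimension $4k - 2k\cdot(2k+1-1)\cdot\ldots$; the point is that for $4k$-manifolds immersed in $R^{4k+2}$ the $(2k+1)$-fold intersection manifold is $0$-dimensional, so its algebraic cardinality is a well-defined integer. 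I would then invoke Herbert-type / Banchoff-type formulas, or rather the homotopy-theoretic repackaging: the $r$-fold point set of an immersion classified by a map $M\to T\gamma$ is computed by the $r$-th power / iterated transfer, and the algebraic number of $(2k+1)$-tuple points of the immersion representing a class $x \in \pi^s_{4k+2}(T\gamma^2_{\mathrm{or}})$ equals, up to a fixed nonzero constant, the evaluation of the Hurewicz image $h(x) \in H_{4k+2}(T\gamma^2_{\mathrm{or}}) \approx H_{4k}(CP^\infty) \approx \mathbb{Z}$ against the generator. This is where I expect to need care: pinning down that the multiple-point count is \emph{exactly} the composite $\pi^s_{4k+2}(T\gamma^2_{\mathrm{or}}) \xrightarrow{h} H_{4k+2}(T\gamma^2_{\mathrm{or}}) \xrightarrow{\text{Thom}} H_{4k}(CP^\infty)\cong\mathbb Z$ — and in particular that the relevant bundle is the \emph{complex} line bundle (giving $CP^\infty$) rather than, say, $BSO(2)$ with a different generator, and that the normalization introduces no extra factor. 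Granting this, part b) is immediate: the set of achievable algebraic $(2k+1)$-tuple-point numbers is exactly the image of $h$, which is a subgroup $t(k)\mathbb Z \subset \mathbb Z$, and $t(k)$ is by definition the order of the cokernel of $h\co \pi^s_{4k+2}(CP^\infty)\lra H_{4k+2}(CP^\infty)$; part a) restates the same fact, the ``case when the number is precisely $t(k)$'' being a class mapping to a generator of the image.

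The main obstacle, then, is the identification in the previous paragraph of the geometric multiple-point count with the stable Hurewicz homomorphism on $MSO(2) = T\gamma^2_{\mathrm{or}}$. I would handle it by the standard ``multiple points $=$ self-intersection via the diagonal/transfer'' argument: an immersion $g\co M^{4k}\looparrowright R^{4k+2}$ with normal bundle induced from $\gamma$ corresponds to a stable map $S^{4k+2}\to T\gamma$, and its $(2k+1)$-fold point manifold corresponds, after passing to the appropriate symmetric power / using that in this dimension range the relevant multiple-point locus is an honest submanifold, to the top cell pairing; since $H_*(T\gamma^2_{\mathrm{or}})$ is torsion-free and concentrated appropriately, the bottom nonzero homotopy-detecting invariant in degree $4k+2$ is precisely the Hurewicz map to $H_{4k+2}(T\gamma^2_{\mathrm{or}}) \cong \mathbb Z$. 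Once this is set up, divisibility of the $(2k+1)$-tuple-point number by $t(k)$ and sharpness follow, completing both parts of the lemma.
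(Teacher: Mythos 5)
Your outline follows the paper's strategy exactly: identify the cobordism group of codimension-$2$ immersions with $\pi^s_{4k+2}(MSO(2))$, factor the tuple-point count through the composite of the stable Hurewicz map, the Thom isomorphism, and evaluation against $p_1^k$, and read off part (b) from the fact that the last two maps are isomorphisms and part (a) from the rational Hurewicz isomorphism. The point you flag as needing care --- that the multiple-point count equals this composite \emph{exactly}, with no extraneous constant --- is the genuine gap, and it is not merely a bookkeeping worry: if the constant were some $c\neq\pm1$, then $t(k)$ would be $|c|$ times the order of the cokernel, so (b) would simply be false. The paper closes the gap with Herbert's theorem \cite{H}, which identifies the algebraic number of $(2k+1)$-tuple points of a self-transverse immersion $f\colon M^{4k}\looparrowright R^{4k+2}$ with the normal Pontrjagin number $\langle \overline{p}_1^{\,k},[M^{4k}]\rangle$ on the nose, followed by a short push-pull computation (factor $\alpha_f$ through the Pontrjagin--Thom map $S^{q+4k+2}\to S^q T\nu$ and use naturality of the cap product) showing $\langle p_1^k,(\alpha_f)_*[S^{q+4k+2}]\cap S^q U_2\rangle = \langle \overline{p}_1^{\,k},[M]\rangle$. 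You should cite Herbert and carry out that explicit identity rather than hope the ``iterated transfer / symmetric power'' repackaging delivers the normalization for free.
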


Before proving Lemma~\ref{lem:1} it will be useful to recall a result on the cokernel of the stable Hurewicz map.

\begin{theor}[Arlettaz \cite{A}]
Let $X$ be a $(b - 1)$--connected space and let $\varrho_j$ be the exponent of the stable homotopy group of spheres $\pi^s(j)$.
Let $h_m\co \pi_m^s(X) \lra H_m(X)$ be the stable Hurewicz homomorphism.
Then $(\varrho_1 \dots \varrho_{m - b - 1})(\text{\rm coker } h_m) = 0$.
\end{theor}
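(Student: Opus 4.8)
The plan is to read the stable Hurewicz homomorphism $h_m$ off the Postnikov tower of the sphere spectrum. Recall that $h_m$ is the map induced on $\pi_m$ by smashing the unit $e\co\mathbb S\lra H\mathbb Z$ of the integral Eilenberg--MacLane spectrum with the suspension spectrum $\Sigma^\infty X$: indeed $\pi_m^s(X)=\pi_m(\mathbb S\wedge\Sigma^\infty X)$, $H_m(X)=\pi_m(H\mathbb Z\wedge\Sigma^\infty X)$, and $h_m=\pi_m(e\wedge\Sigma^\infty X)$. Write $\mathbb S\langle k\rangle$ for the $(k-1)$--connected cover of $\mathbb S$, so that $\pi_i\mathbb S\langle k\rangle=\pi^s(i)$ for $i\ge k$ and $0$ otherwise, $\mathbb S\langle 0\rangle=\mathbb S$, and there are fibre sequences
\[
\mathbb S\langle k+1\rangle\ \lra\ \mathbb S\langle k\rangle\ \lra\ \Sigma^{k}H\pi^s(k)\qquad(k\ge 0),
\]
the bottom one being $\mathbb S\langle 1\rangle\to\mathbb S\to H\mathbb Z$. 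Smashing each with $\Sigma^\infty X$ and passing to homotopy long exact sequences, the cofibre term at level $k$ contributes the group $\pi_i(\Sigma^kH\pi^s(k)\wedge\Sigma^\infty X)\cong H_{i-k}(X;\pi^s(k))$, which is annihilated by $\varrho_k$ because $\varrho_k$ already annihilates the coefficient group $\pi^s(k)$.

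First I would use the bottom fibre sequence: its homotopy long exact sequence gives $\operatorname{coker}h_m\cong\ker\bigl(\pi_{m-1}(\mathbb S\langle 1\rangle\wedge\Sigma^\infty X)\to\pi_{m-1}(\mathbb S\wedge\Sigma^\infty X)\bigr)$, so $\operatorname{coker}h_m$ embeds into $\pi_{m-1}(\mathbb S\langle 1\rangle\wedge\Sigma^\infty X)$. The heart of the proof is then an induction on $k\ge 1$ establishing
\[
(\varrho_1\varrho_2\cdots\varrho_k)\cdot\pi_{m-1}(\mathbb S\langle 1\rangle\wedge\Sigma^\infty X)\ \subseteq\ \operatorname{im}\bigl(\pi_{m-1}(\mathbb S\langle k+1\rangle\wedge\Sigma^\infty X)\to\pi_{m-1}(\mathbb S\langle 1\rangle\wedge\Sigma^\infty X)\bigr),
\]
the map being the comparison coming down the tower. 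Base case and inductive step are the same manoeuvre: from the fibre sequence $\mathbb S\langle k+2\rangle\to\mathbb S\langle k+1\rangle\to\Sigma^{k+1}H\pi^s(k+1)$ the map $\pi_{m-1}(\mathbb S\langle k+1\rangle\wedge\Sigma^\infty X)\to H_{m-k-2}(X;\pi^s(k+1))$ is killed by $\varrho_{k+1}$, so after multiplication by $\varrho_{k+1}$ any class lifts one further stage, into $\pi_{m-1}(\mathbb S\langle k+2\rangle\wedge\Sigma^\infty X)$. Finally, since $X$ is $(b-1)$--connected, $\Sigma^\infty X$ is $(b-1)$--connected and $\mathbb S\langle m-b\rangle$ is $(m-b-1)$--connected, whence $\mathbb S\langle m-b\rangle\wedge\Sigma^\infty X$ is $(m-1)$--connected and has vanishing $\pi_{m-1}$; taking $k=m-b-1$ in the displayed inclusion yields $(\varrho_1\cdots\varrho_{m-b-1})\cdot\pi_{m-1}(\mathbb S\langle 1\rangle\wedge\Sigma^\infty X)=0$, hence $(\varrho_1\cdots\varrho_{m-b-1})(\operatorname{coker}h_m)=0$. (For $m\le b+1$ the product is empty and $\operatorname{coker}h_m$ already vanishes by connectivity, so the statement is vacuous.)

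The one point needing care is the bookkeeping in the induction: the factor $\varrho_{k+1}$ must be applied to a class \emph{already lifted} to $\pi_{m-1}(\mathbb S\langle k+1\rangle\wedge\Sigma^\infty X)$, and the new lift into $\pi_{m-1}(\mathbb S\langle k+2\rangle\wedge\Sigma^\infty X)$ then has to be transported all the way back down to $\pi_{m-1}(\mathbb S\langle 1\rangle\wedge\Sigma^\infty X)$ along the compatible comparison maps of the tower; once these maps are fixed the required composition identities are immediate, and the rest is formal. The same content can be repackaged through the homology Atiyah--Hirzebruch spectral sequence $E^2_{p,q}=H_p(X;\pi^s(q))\Rightarrow\pi_{p+q}^s(X)$: here $h_m$ is the edge homomorphism onto $E^\infty_{m,0}\subseteq E^2_{m,0}=H_m(X)$; by connectivity of $X$ the only differentials out of the bottom row are $d_r\co E^r_{m,0}\to E^r_{m-r,\,r-1}$ for $2\le r\le m-b$, each landing in a subquotient of $H_{m-r}(X;\pi^s(r-1))$ and so killed by $\varrho_{r-1}$; thus $\operatorname{coker}h_m=H_m(X)/E^\infty_{m,0}$ is filtered with successive subquotients killed by $\varrho_1,\dots,\varrho_{m-b-1}$, which again gives the bound.
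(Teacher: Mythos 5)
The paper does not prove this statement at all --- it is quoted as an external result of Arlettaz \cite{A}, so there is no internal proof to compare against. Your argument is a correct and complete proof of the quoted special case, and it is essentially the standard one (and close in spirit to Arlettaz's own, which works with the Postnikov decomposition of the relevant spectrum). All the steps check out: the identification of $\text{\rm coker}\, h_m$ with a subgroup of $\pi_{m-1}$ of the fibre $\mathbb{S}\langle 1\rangle\wedge\Sigma^\infty X$ via the long exact sequence of $\mathbb{S}\langle 1\rangle\to\mathbb{S}\to H\mathbb{Z}$; the inductive lifting up the tower, where multiplying by $\varrho_{k+1}$ kills the obstruction in $H_{m-k-2}(X;\pi^s(k+1))$ because $\varrho_{k+1}$ already annihilates the coefficients; and the connectivity count $\left((m-b-1)+(b-1)+1=m-1\right)$ that terminates the induction at $k=m-b-1$. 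The Atiyah--Hirzebruch repackaging at the end is also correct (the differentials $d_r$ out of $E^r_{m,0}$ vanish for $r>m-b$ by the connectivity of $X$, and each $\operatorname{im} d_r$ is a subquotient of $H_{m-r}(X;\pi^s(r-1))$, killed by $\varrho_{r-1}$), and is arguably the cleaner formulation. The only caveat worth recording is that what you prove is exactly the special case stated here (stable homotopy of a space), not Arlettaz's general theorem for an arbitrary connective homology theory, but that is all the paper uses.
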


Next we recall a theorem of Serre on the prime divisors of the numbers $\varrho_j$.

\begin{theor}[Serre \cite{S1}]
$\pi^s(i) \otimes Z_p = 0$ if $i < 2p - 3$ and $\pi^s(2p - 3) \otimes Z_p = Z_p$.
\end{theor}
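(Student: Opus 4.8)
The plan is to recover Serre's original computation of the bottom of the $p$-primary stable stems by a fibration/Postnikov argument. Since every group $\pi^s(i)$ with $i \geq 1$ is finite, it suffices to identify its $p$-primary part, and by the Freudenthal suspension theorem $\pi^s(i) \cong \pi_{n+i}(S^n)$ for any $n \geq i+2$. So I would fix such an $n$ (large, depending on the range $i \leq 2p-3$) and let $F$ be the homotopy fibre of the map $S^n \to K(Z,n)$ classifying a generator of $H^n(S^n;Z)$. Then $F$ is $n$-connected, and the long exact homotopy sequence of $F \to S^n \to K(Z,n)$ (whose base has homotopy concentrated in degree $n$) gives $\pi_{n+i}(F) \cong \pi_{n+i}(S^n)$ for all $i \geq 1$. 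It therefore suffices to show $\pi_{n+i}(F)\otimes Z_p = 0$ for $0 < i < 2p-3$ and $\pi_{n+2p-3}(F)\otimes Z_p \cong Z_p$.

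The key input I would invoke is the Cartan--Serre description of $H^*(K(Z,n);Z_p)$ for $p$ an odd prime: it is the free graded-commutative $Z_p$-algebra on the classes $P^I\iota_n$, where $\iota_n$ is the fundamental class and $I$ ranges over admissible monomials in the reduced powers $P^j$ and the Bockstein $\beta$, of excess less than $n$ and not ending in $\beta$. For $n$ large the only such class of degree in the window $(n,\ n+2p-2]$, apart from $\iota_n$ itself, is $P^1\iota_n$ in degree $n+2(p-1)=n+2p-2$ (note $\beta\iota_n=0$, and the first product class $\iota_n^2$ lies well above this window), so $\overline H^k(K(Z,n);Z_p)=0$ for $n<k<n+2p-2$. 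Feeding $H^*(S^n;Z_p)$ and $H^*(K(Z,n);Z_p)$ into the Serre spectral sequence of $F\to S^n\to K(Z,n)$ with $Z_p$ coefficients, and using that $\overline H^*(S^n;Z_p)$ is nonzero only in degree $n$, a direct bookkeeping of transgressions forces $\overline H^k(F;Z_p)=0$ for $0<k<n+2p-3$ and $H^{n+2p-3}(F;Z_p)\cong Z_p$, the generator of the latter transgressing to $P^1\iota_n$. By the universal coefficient theorem the $p$-primary part of $H_*(F;Z)$ then vanishes in positive degrees below $n+2p-3$ and is a nontrivial cyclic group in degree $n+2p-3$; the mod $\mathcal C_p$ Hurewicz theorem (with $\mathcal C_p$ the class of finite abelian groups of order prime to $p$) now yields $\pi_{n+i}(F)\otimes Z_p=0$ for $0<i<2p-3$ and $\pi_{n+2p-3}(F)\otimes Z_p\cong Z_p$, as required. (Alternatively one could run the Adams spectral sequence in low stems, where $\mathrm{Ext}_{\mathcal A_p}(Z_p,Z_p)$ is concentrated on the $0$-line for stems $0<i<2p-3$ and has a single class in stem $2p-3$; but the Serre-class route is closer to the methods already used in this paper.)

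The one genuinely non-formal ingredient, and hence the main obstacle, is the computation of $H^*(K(Z,n);Z_p)$ in the range $(n,\ n+2p-2]$: everything hinges on knowing that no admissible Steenrod monomial other than $P^1$ contributes there and that $P^1$ contributes exactly once. Everything else --- the reduction to a fixed large $n$, the transgression bookkeeping in the Serre spectral sequence, and the mod $\mathcal C_p$ Hurewicz step --- is routine. Finally, note that the stated conclusion ``$\pi^s(2p-3)\otimes Z_p\cong Z_p$'' asserts only that the $p$-primary part is nontrivial and cyclic, which is exactly what the one-dimensionality of $H^{n+2p-3}(F;Z_p)$ delivers, so no finer Bockstein analysis of $F$ is needed.
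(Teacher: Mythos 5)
The paper does not prove this statement at all --- it is quoted verbatim as an external result of Serre \cite{S1}, so there is no internal proof to compare against. Your sketch is a correct reconstruction of Serre's classical argument: reduce to $\pi_{n+i}(S^n)$ for large $n$ by Freudenthal, pass to the $n$--connected fibre $F$ of $S^n \to K(Z,n)$, feed the Cartan--Serre computation of $H^*(K(Z,n);Z_p)$ into the Serre spectral sequence, and finish with the mod $\mathcal C_p$ Hurewicz theorem. The transgression bookkeeping does work as you claim: in total degrees below $2n$ the only possible differential out of $E_r^{0,t}$ is the transgression into $H^{t+1}(K(Z,n);Z_p)$, which vanishes for $n < t+1 < n+2p-2$ and is $Z_p\{P^1\iota_n\}$ for $t+1 = n+2p-2$; and you correctly observe that the conclusion $\pi^s(2p-3)\otimes Z_p \cong Z_p$ only requires the $p$--primary part of $\pi_{n+2p-3}(F)$ to be a nontrivial cyclic group, so no Bockstein analysis is needed. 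You rightly isolate the structure of $H^*(K(Z,n);Z_p)$ in the window $(n, n+2p-2]$ as the one non-formal input; granting that (standard) computation, the argument is complete.
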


Hence $\varrho_j$ is not divisible by a prime $p$ if $p > \frac{j + 3}{2}$,
in other words, $\varrho_j$ may have a prime $p$ as a divisor only if $p \leq \frac{j + 3}{2}$.

Applying Arlettaz' theorem to $X = CP^\infty$, $b = 2$, $m = 4r + 2$ we obtain that $t(r)$ has no prime divisor greater than $2r + 1$.

\begin{proof}[Proof of Theorem~\ref{th:C}]
should be clear now, since it is completely analogous to that of Theorem~\ref{th:B}.

First we consider the ``key bundle''
\[
X\Sigma^{1_{2r + 1}} \lra \Gamma T \wt\xi_{2r + 1} \ \text{ with fiber } \ X \Sigma^{1_{2r}}.
\]
Remember that $H^*(\Gamma T \wt\xi_{2r + 1}; Z_p) = 0$ for any odd $p$, so -- by the $\text{\rm mod }\mathcal C$ Whitehead theorem \cite{S2} we obtain the first ($\text{\rm mod } \mathcal C_2$) isomorphism in the Theorem
\[
\Cob\, \Sigma^{1_{2r + 1}} (n) \underset{\mathcal C_2}{\approx} \Cob\, \Sigma^{1_{2r}}(n).
\]
In order to prove the second ($\text{\rm mod }\mathcal C\{p \leq 2 r + 1\}$) isomorphism recall that modulo the class $\mathcal C_2$ the key bundle
\[
X \Sigma^{1_{2r}} \lra \Gamma T\wt\xi_{2r} \ \ \text{ with fibre }\ X \Sigma^{1_{2r - 1}}
\]
can be replaced by the bundle
\[
X^{\fr} \Sigma^{1_{2r}} \lra \Gamma S^{4r + 1} \ \text{ with the same fibre.}
\]
The later bundle has a(n algebraic) $t(r)$--splitting.

By Lemma~\ref{lem:1} and the theorems of Arlettaz and Serre $t(r)$ has no prime divisor greater than $2r + 1$, hence by induction on~$r$ we obtain the second isomorphism in Theorem~\ref{th:C}.
\end{proof}

\begin{proof}[Proof of Lemma~\ref{lem:1}]
By Herbert's theorem the algebraic number of the $(2k + 1)$--tuple points of an immersion $f\co M^{4k} \looparrowright R^{4k + 2}$ is $\left<{\overline p_1}^k , [M^{4k}]\right>$.
The immersion $f$represents an element $[f]$ of the corresponding cobordism group of immersions of oriented $4k$--manifolds in $R^{4k + 2}$.
This cobordism group is isomorphic to the group $\pi_{4k + 2}^s (MSO(2))$, the element of the later group corresponding to $[f]$ will be denoted by $[\alpha_f]$.
Here $\alpha_f$ is the Pontrjagin--Thom map $S^{q + 4k + 2} \lra S^q MSO(2)$, for $q$ big enough.

Let us consider the following composition of maps
\[
\pi_{4k + 2}^s(MSO(2))\ {\underset \circledone{\lra}}\ H_{4k + 2 + q} \bigl(S^q MSO(2))
\ \overset{\displaystyle\approx}{\underset\circledtwo{\lra}}\ H_{4k} (BSO(2))
\ \overset{\displaystyle\approx}{\underset\circledthree{\lra}}\ Z.
\]
Here $\circledone$ is the stable Hurewicz homomorphism,
$\circledtwo$ is the Thom isomorphism in the homologies
\[
x \lra S^q U_2 \cap x
\]
where $U_2$ is the Thom class of $MSO(2)$ and $S^q U_2$ its $q$\textsuperscript{th}  suspension, and also the Thom class of $S^q MSO(2)$.

$\circledthree$ is the evaluation on the class ${p_1}^k$
\[
y \lra \langle y, {p_1}^k\rangle.
\]

Since the maps $\circledtwo$ and $\circledthree$ are isomorphisms, the cokernel of this composition is the same as the cokernel of $\circledone$, i.e.\ of the stable Hurewicz homomorphism.

On the other hand, we show that the image of this composition map is $t(k)  Z$, and that will prove
part~b) of Lemma~\ref{lem:1}.
(Part~a) follows then as well, since the rational stable Hurewicz homomorphism
\[
\pi_m^s(X) \otimes Q \lra H_m(X; Q) \ \text{ is an isomorphism.})
\]

\begin{claim}
The composition of the maps $\circledone,\ \circledtwo,\ \circledthree$ has image $t(k) \cdot Z$.
\end{claim}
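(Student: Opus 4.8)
The plan is to identify the image of $\circledthree\circ\circledtwo\circ\circledone$ with the subgroup of $Z$ generated by the algebraic numbers of $(2k+1)$--tuple points of oriented self--transverse immersions $f\co M^{4k}\looparrowright R^{4k+2}$; by the definition of $t(k)$ in part~a) of Lemma~\ref{lem:1} this subgroup is exactly $t(k)\cdot Z$. Since $f\mapsto[\alpha_f]$ realises the isomorphism between the cobordism group of such immersions and $\pi_{4k+2}^s(MSO(2))$ recalled above (Wells), every element of the source is of the form $[\alpha_f]$, so it suffices to prove the single identity
\[
\circledthree\Bigl(\circledtwo\bigl(\circledone([\alpha_f])\bigr)\Bigr)=\bigl\langle\,{\overline{p_1}}^{\,k},\,[M^{4k}]\,\bigr\rangle,\qquad \overline{p_1}=p_1(\nu),
\]
where $\nu$ is the normal $2$--plane bundle of $f$. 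Indeed, by Herbert's theorem recalled above the right--hand side equals the algebraic number of $(2k+1)$--tuple points of $f$; thus, as $f$ ranges over all oriented self--transverse immersions $M^{4k}\looparrowright R^{4k+2}$, the left--hand side ranges over the full image of the composition, which is therefore the subgroup of $Z$ in question.

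To prove the displayed identity I would use the standard geometric description of the stable Hurewicz map $\circledone$ on this homotopy group. Thicken $f$ to an embedding $F\co M^{4k}\hookrightarrow R^{4k+2+q}$ with $q$ large; its normal bundle is $\nu\oplus\varepsilon^q$, and by the very construction of $\alpha_f$ its Pontrjagin--Thom collapse $S^{4k+2+q}\lra S^qMSO(2)$ factors as $S^{4k+2+q}\lra S^qT\nu\overset{S^qT\phi}{\lra}S^qMSO(2)$, where $\phi\co M\lra BSO(2)$ classifies $\nu$ and $T\phi$ is the induced map of Thom spaces. Pushing the fundamental class $[S^{4k+2+q}]$ forward along the collapse yields the fundamental class of the Thom space $S^qT\nu$ — the class that the Thom isomorphism sends to $[M]\in H_{4k}(M)$; applying $S^qT\phi$ and then $\circledtwo$ and using naturality of the Thom isomorphism under the bundle map $\phi$ (the projection formula for the cap product with the Thom class), we get $\circledtwo\bigl(\circledone([\alpha_f])\bigr)=\phi_*[M]\in H_{4k}(BSO(2))$. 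Finally, since $\phi^*p_1=p_1(\nu)=\overline{p_1}$, naturality of the Kronecker pairing gives
\[
\circledthree\bigl(\phi_*[M]\bigr)=\bigl\langle\phi_*[M],\,{p_1}^{k}\bigr\rangle=\bigl\langle[M],\,\phi^*({p_1}^{k})\bigr\rangle=\bigl\langle{\overline{p_1}}^{\,k},[M]\bigr\rangle,
\]
which is the required identity.

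It then remains to assemble the pieces. The image of $\circledthree\circ\circledtwo\circ\circledone$ is the set of algebraic $(2k+1)$--tuple point numbers of all oriented self--transverse immersions $M^{4k}\looparrowright R^{4k+2}$; being the image of a homomorphism into $Z$ it equals $t(k)\cdot Z$, and $t(k)>0$ because the rational stable Hurewicz map $\pi_{4k+2}^s(MSO(2))\otimes Q\lra H_{4k+2}(MSO(2);Q)$ is an isomorphism, so $\circledone$ — and hence the composition — has finite cokernel. This proves the Claim. (Since $\circledtwo$ and $\circledthree$ are isomorphisms, the cokernel of the composition equals that of $\circledone$, and identifying $MSO(2)$ with the Thom space of the tautological line bundle over $CP^\infty=BSO(2)$, which is $CP^\infty$ itself, then gives part~b) of Lemma~\ref{lem:1}, with part~a) following as indicated.) The step I expect to be the main obstacle is the geometric identification $\circledtwo\circ\circledone([\alpha_f])=\phi_*[M]$: one must check carefully that the Pontrjagin--Thom/Wells correspondence between immersion cobordism and the stable homotopy of Thom spaces is compatible with the Hurewicz homomorphism in the stated way — in particular that replacing the immersion by a thickened embedding does not alter the class — and that Herbert's formula is being applied with signs and indexing conventions matching those of $\circledone$, $\circledtwo$, $\circledthree$.
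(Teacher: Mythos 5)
Your proposal is correct and follows essentially the same route as the paper: reduce to the identity $\circledthree\circ\circledtwo\circ\circledone([\alpha_f])=\langle\overline{p_1}^{\,k},[M]\rangle$, obtain it by factoring $\alpha_f$ through $S^qT\nu\to S^q\,MSO(2)$ and invoking naturality of the Thom isomorphism and of the Kronecker pairing, and then conclude via Herbert's theorem that the image is exactly the subgroup of algebraic $(2k+1)$--tuple point counts, namely $t(k)\cdot Z$. The only cosmetic difference is that you spell out the thickening to an embedding and the potential sign/indexing caveats, which the paper leaves implicit.
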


{\bf Proof}\quad
It is enough to show that the image of $[\alpha_f] \in \pi_{4k + 2}^s (MSO(2))$ is $\left< {\overline p_1}^{k}, [M^{4k}]\right>$.

$[\alpha_f]$ goes by the map $\circledone$ to $(\alpha_f)_*[S^{q + 4k + 2}]$, that is mapped by $\circledtwo$ to $(\alpha_f)_*[S^{q + 4k + 2}]\cap S^q U_2$.

Let $\nu$ be the normal bundle of $f$, let us denote by $T\nu$ its Thom space, let $pr\co S^{q + 4k + 2} \lra S^q T\nu$ be the Pontrjagin--Thom map, $\beta_f\co S^q T\nu \lra S^q\, MSO(2)$ the fiberwise map of Thom spaces that on the base spaces is the map $\nu_f\co M \lra BSO(2)$ inducing the normal bundle~$\nu$.

Now $(\alpha_f)_*[S^{q + 4k + 2}] = (\beta_f)_* \circ pr_*[S^{q + 4k + 2}] = (\beta_f)_* [S^q T\nu]$.
Here $[S^q T\nu]$ is the fundamental homology class of $S^q T\nu$.
Therefore
\[
\aligned
\left<{p_1}^k\!, (\alpha_f)_* [S^{q + 4k + 2}] \cap S^q U_2\right>
&= \left<{p_1}^k\!, (\beta_f)_* [S^q T\nu] \cap S^q U_2\right> \\
&= \left<{p_1}^k\!, (\nu_f)_* [M] \right>
= \left< {\nu_f}^*{p_1}^k\!, [M]\right>
= \left< {\overline p_1}^k\!, [M]\right>\!. \ \text{ Q.E.D.}
\endaligned
\]
\end{proof}


\frenchspacing


\begin{thebibliography}
\bibitem[A]{A}
{\bf D Arlettaz}, \emph{Exponents for extraordinary homology groups}, Comment. Math. Helv. 68 (1993) 653--672

\bibitem[AGV]{AGV}
{\bf V I Arnold, S M Gusein-Zade, A N Varchenko},
\emph{Singularities of differentiable maps, Vol. I, The classification of critical points, caustics and wave fronts}. Translated from Russian. Monographs in Matematics 82, Birkh\"auser, Boston (1985)

\bibitem[EM]{EM}
{\bf P J Eccles, W P R Mitchell},
\emph{Triple points of immersed orientable $2n$--manifolds in $3n$--space},
J. London Math. Soc. (2) 39 no. 2 (1989) 335--346

\bibitem[H]{H}
{\bf J R Herbert},
\emph{Multiple points of immersed manifolds},
Memoirs AMS 34 (1981) no. 250

\bibitem[KP]{KP}
{\bf D S Kahn, S B Priddy},
\emph{Applications of the transfer to stable homotopy theory},
Bull. Amer. Math. Soc. 78 (1972) 981--987

\bibitem[K]{K}
{\bf U Koschorke},
\emph{Multiple points of immersions and the Kahn--Priddy theorem},
Mathematische Zeitschrift 169 (1979) 223--236

\bibitem[L]{L}
{\bf G Lippner},
\emph{Singularities of projected immersions revisited},
Algebr. Geom. Topol. 9 no. 3 (2009), 1623--1635

\bibitem[Liu]{Liu}
{\bf A Liulevicius},
\emph{A theorem in homological algebra and stable homotopy of projective spaces},
Trans. Amer. Math. Soc. 109 (1963), 540--552

\bibitem[R]{R}
{\bf R Rim\'anyi},
\emph{Generalized Pontrjagin--Thom construction for singular maps},
PhD thesis, E\"otv\"os University (1996) {\tt http://www.unc.edu/~rimanyi/cikkek/}

\bibitem[RSz]{RSz}
{\bf R Rim\'anyi, A Sz\H{u}cs},
\emph{Pontrjagin--Thom type construction for maps with singularities},
Topology 37 (1998) 1177--1191

\bibitem[RS]{RS}
{\bf C Rourke, B Sanderson},
\emph{The compression theorem. I},
Geom. Topol. 5 (2001) 399--429

\bibitem[S1]{S1}
{\bf J P Serre},
\emph{Homologie singuli\`ere des espaces fibr\'es.
Applications},
Ann. of Math. (2) 54 (1951) 425--505

\bibitem[S2]{S2}
{\bf J P Serre},
\emph{Groupes d'homotopie et classes de groupes ab\'eliens},
Ann. of Math. (2) 58 (1953) 258--294

\bibitem[Sz1]{Sz1}
{\bf A Sz\H{u}cs},
\emph{On the multiple points of immersions in Euclidean spaces},
Proc. Amer. Math. Soc. 126 no. 6 June (1998) 1873--1882

\bibitem[Sz2]{Sz2}
{\bf A Sz\H{u}cs},
\emph{On the singularities of hyperplane projections of immersions},
Bull. London Math. Soc. 32 (2000) 364--374

\bibitem[Sz3]{Sz3}
{\bf A Sz\H{u}cs},
\emph{Cobordism of singular maps},
Geom. Topol. 12 no. 4 (2008) 2379--2452

\bibitem[T]{T}
{\bf T Terpai},
\emph{Fibration of classifying spaces in the cobordism theory of singular maps},
Tr. Mat. Inst. Steklova 267 (2009) Osobennosti i Prilozheniya, 280--287.
Translation in Proc. Steklov Inst. Math. 267 no. 1 (2009) 270--277

\bibitem[W]{W}
{\bf R Wells}, \emph{Cobordism groups of immersions}, Topology 5 (1966) 281--294
\end{thebibliography}
\end{document}